\documentclass[10pt,english]{article}
\usepackage{geometry}
\usepackage{amsmath,amsfonts, amsthm,amssymb,oldgerm,amssymb}
\usepackage{array}
\usepackage[english]{babel}
\usepackage{float}
\usepackage{url}
\usepackage{cite} 
\usepackage{hyperref}
\usepackage{xcolor} 
\usepackage{comment}
\usepackage{enumitem}
\usepackage{amssymb,mathrsfs,graphicx,amsmath,graphicx,amsthm}
 \numberwithin{equation}{section}
\usepackage{mathtools}
\usepackage{array,multicol,comment,lipsum,caption}
\usepackage{color}

\newtheorem{theorem}{Theorem}[section]

\newtheorem{definition}{Definition}[section]

\newtheorem{lemma}{Lemma}[section]

\newtheorem{remark}{Remark}[section]

\newtheorem{proposition}{Proposition}[section]

\usepackage{epsfig}
\usepackage{relsize}
\usepackage{hyperref}
\usepackage[utf8]{inputenc}

\newcommand{\Int}{\displaystyle\int}

\newcommand{\R}{{\mathbb{R}}}
\newcommand{\N}{{\mathbb{N}}}

\newcommand{\oo}{{\mathcal{O}}}

\DeclarePairedDelimiter\abs{\lvert}{\rvert}
\DeclarePairedDelimiter\norm{\lVert}{\rVert}
\DeclarePairedDelimiterX{\inp}[2]{\langle}{\rangle}{#1, #2}

\makeatletter
\newcommand{\myitem}[1]{%
\item[#1]\protected@edef\@currentlabel{#1}%
}
\makeatother

\newcommand{\mycomment}[1]{}
\begin{document}

\title{\bf Analysis and stability of the FitzHugh-Nagumo system with rapidly oscillating Neumann boundary condition
}

\author{Alexander Rodr\'iguez\ \footnotemark[2] \hspace{.1cm }and Eduardo Cerpa\footnotemark[3]}


\footnotetext[2]{Instituto de Ingenier\'ia Matem\'atica y Computacional, Facultad de Matem\'aticas, Pontificia Universidad Católica de Chile, Avda.
Vicu\~na Mackenna 4860, Macul, Santiago, Chile. E.mail: eduardo.cerpa@uc.cl}

\footnotetext[2]{Facultad de Matem\'aticas, Pontificia Universidad Católica de Chile, Avda.
Vicu\~na Mackenna 4860, Macul, Santiago, Chile. E.mail: jarodriguez19@uc.cl}

\date{\today}

\maketitle

\begin{abstract}
In this paper we study the FitzHugh-Nagumo system posed on the half-line with a sinusoidal forcing term acting at the origin through the Neumann boun\-da\-ry condition.  By applying an averaging method, we show that, if the frequency of the oscillating forcing is high enough, then 
 the system can be approximated by a combination of a highly-oscillatory term and the solution of a simpler system. This approach allows us to establish stability results under persistent time-varying boundary conditions.  Along with the averaging method, we use an adaptation of the contracting rectangles method, tailored to handle parabolic equations with spatially and temporally varying coefficients.
\end{abstract}
\section{Introduction}
The FitzHugh-Nagumo (FHN) system \cite{fhn} is a mathematical model that describes the dynamics of membrane voltage in excitable systems. It has been instrumental in understanding neuron responses to various phenomena, including neural oscillations, cardiac pacing, and pattern formation in biophysical systems \cite{hodgkin1952quantitative,rattay1986analysis,MR1952568,MR2263523,MR381744}.
To gain a deeper insight into these processes, it is crucial to analyze the FHN system with boundary conditions that account for the rate of change of the membrane voltage. These conditions help in accurately modeling how the membrane voltage evolves and interacts with its environment, providing deeper insights into the dynamics of excitable systems and their behavior under various physiological scenarios. Mathematically, the FHN system is composed by two coupled parabolic equations and in this paper we consider it posed on the half-line with a forcing at the origin. Thus, the system reads as
\begin{equation}\label{ecuacionfhnneuman}
\left\{\begin{array}{ll}
f_t= f_{x x}+ f - \dfrac{f^3}{3}-g, & x \geqslant 0, t \geqslant 0, \\
g_t= dg_{x x} + \varepsilon(f-\gamma g + \beta), & x \geqslant 0, t \geqslant 0, \\
f(0, x)=f_0(x),\quad  g(0, x)=g_0(x), & x \geqslant 0, \\
f_x(t, 0)=h(t), & t \geqslant 0,
\end{array}\right.
\end{equation}
where $f=f(t,x)$ represents the membrane potential and $g=g(t,x)$ is a recovery variable, $h(t)$ is a source acting from the boundary, $d\geq 0$ is a diffusion coefficient, $f_0, g_0$ are the initial conditions and $\varepsilon$, $\gamma$, $\beta$ are positive constants. In the absence of boundary effects, numerous results have been established regarding the well-posedness of the system \cite{RAUCH197812,MR425397}, and the existence and stability of traveling wave solutions \cite{MR760971,MR3551276,MR3395129,MR3804135,MR1744043,MR1426757,MR4622876,MR4881333,MR430536}. In the case where boundary effects are present, various studies have considered Dirichlet, Neumann, or more general boundary conditions to understand their impact on the dynamics \cite{SCHONBEK1978119,MR4583896,MR3927583}.\\

Multiple and relevant phenomena on the behavior  of biological systems, occur with oscillating forcing. Understanding the influence of periodic external stimuli on excitable media is crucial for advancing our knowledge of dynamic biological systems. Incorporating trigonometric functions into Neumann boundary conditions provides a robust framework for exploring how rhythmic inputs affect system behavior, revealing insights into wave propagation, synchronization, and pattern formation \cite{MR2674516,MR952149,MR1952568}. Periodic boundary conditions model real-world scenarios where external oscillations, such as those from neural or cardiac stimulation, drive system dynamics. This approach not only deepens theoretical understanding but also has practical implications for designing therapeutic strategies and technologies.\\

High-frequency stimulation (HFS) of neurons represents a significant area of research at the intersection of mathematical modeling and medical applications. From a mathematical standpoint, the FitzHugh-Nagumo (FHN) equations serve as a fundamental framework for understanding how HFS influences neural dynamics \cite{PhysRevE.73.061102,PhysRevE.108.014207,MR4166598}. This mathematical approach is crucial for developing and refining models that predict neural responses and optimize stimulation parameters. Medically, HFS, exemplified by deep brain stimulation (DBS), is used to treat neurological disorders such as Parkinson's disease, epilepsy, and dystonia. This combination of mathematical modeling and clinical application offers a promising pathway to refine therapeutic approaches and improve patient outcomes.\\

Building upon the work presented in \cite{cerpa2023approximation, cerpaode}, this paper extends the investigation of reaction-diffusion system with boundary conditions. Our study further explores neuron responses under a different setting, aiming to enhance understanding and address the limitations identified in their work. We consider the one-dimensional FHN system with a trigonometric source, that is composed of a single sinusoidal expression with a small amplitude. These kinds of inputs elucidate how rhythmic external signals affect neural activity and synchronization \cite{MR2674516,MR2263523,cross1993pattern}. We study this setting using techniques of averaging as done in \cite{MR2316999,MR2881590,weinberg2013high,Khalil:1173048,MR1844527,MR2482394}, finding an approximate system for which we present some stability results. However, unlike the previous study, where explicit expressions allowed for the decomposition of the solution into fast and slow components, our work encounters additional complexities. Specifically, the absence of straightforward expressions for decomposing the solution necessitates a novel approach to apply the averaging method. This challenge requires us to develop and implement alternative strategies to approximate the solution and analyze the behavior of the system effectively. Beside employing the averaging method, another critical element of this work is the use of the contracting rectangles technique, which has been applied in previous studies \cite{SCHONBEK1978119,RAUCH197812,MR445091,MR1036209,MR3262177}.
\section{Setting}
Let us start this section by introducing all the functional spaces appearing in this paper.

\begin{definition} By denoting by $\R$ the set of real numbers and $\R^+$ the interval $[0,\infty)$, we set the following spaces involving functions $U: \R^+ \to \R$, for $k\geq 0$ an integer number:
$$
\begin{aligned}
&B C^k(\R^+)=\{U:(d / d x)^j U  \, \text{is a bounded uniformly continuous function on } \, \mathbb{R}^+ \, \text{for} \, \, 0 \leqslant j \leqslant k\}, \\
& C_0^k(\R^+)=\left\{U \in B C^k: U(0)=0 \text{ and }\lim (d / d x)^j U=0 \text { as }x \rightarrow \infty \text { for } 0 \leqslant j \leqslant k\right\},\\
&W^{k,p}(\R^+)=\left\{U \in L^p\left(\mathbb{R}^{+}\right):(d / d x)^j U \in L^p\left(\mathbb{R}^{+}\right) \text { for } 0 \leqslant j \leqslant k\right\}.
\end{aligned}
$$
\end{definition}

Now, we consider the system \eqref{ecuacionfhnneuman} with the boundary source $h(t)= A\sin(\omega t)$ with a small constant $A$ and a rather big frequency $\omega \gg 1$. In order to study this case, we first do a change of variable to find an equivalent problem, so we take a lift off function $\psi \in C^2_0(\R^+)$ such that $\psi^{\prime}(0)=-1$. Thus, considering  
\begin{equation}
    f = \overline{f} - h(t)\psi(x), \quad \text{and} \quad g = \overline{g},
\end{equation}
we got the system
\begin{equation}\label{ecuacionfhninternal}
\left\{\begin{aligned}
\overline{f}_t-\overline{f}_{x x}=& (\overline{f} - h\psi) - \dfrac{(\overline{f}-h\psi)^3}{3}-\overline{g} + \dot{h}\psi - h\psi^{\prime \prime}, & x \geqslant 0, t \geqslant 0, \\
\overline{g}_t- d\overline{g}_{x x} =& \varepsilon(\overline{f}-\gamma \overline{g} + \beta) + \varepsilon h\psi, & x \geqslant 0, t \geqslant 0, \\
\overline{f}(0, x)=f_0(x)&,\quad  \overline{g}(0, x)=g_0(x), & x \geqslant 0, \\
\overline{f}_x(t, 0)=0&, & t \geqslant 0.
\end{aligned}\right.
\end{equation}
Now for the initial data of system \eqref{ecuacionfhninternal} we impose that
\begin{equation*}
    f_0(x) \to v_0, \quad g_0(x) \to u_0 \quad \text{as} \quad x \to \infty
\end{equation*}
where $(v_0,u_0) \in \R^2$ is the unique solution of 
\begin{equation}\label{centerequation}
    \left\{\begin{array}{cc}
         0 =& v_0 - v_0^3/3 - u_0,  \\
         0 =& v_0 - \gamma u_0 +\beta.
    \end{array}\right.
\end{equation}
Consequently, the point $(v_0,u_0)$ is a stationary solution for \eqref{ecuacionfhninternal} when $\psi = 0$. To study the system around the stationary solution given by \eqref{centerequation},  we set $v = \overline{f} - v_0, u = \overline{g} - u_0, \overline{f}_0 = f_0 - v_0, \overline{g}_0 = g_0 - u_0$ to get
\begin{equation}\label{centeredfhn}
\left\{\begin{array}{rclcl}
v_t - v_{xx} &=& \dot{h}\psi - h\psi^{\prime \prime} + (v-h\psi) - v^3/3 - v^2v_0 - vv_0^2 + v^2h\psi  &&  \\
&&+ \,\, 2vv_0h\psi +v_0^2h\psi - vh^2\psi^2 - v_0h^2\psi^2 + \frac{1}{3} h^3\psi^3 - u, & &x\geq 0, t\geq 0,\\
u_t - d u_{xx} &=& \varepsilon(v - \gamma u) + \varepsilon h\psi, & & x\geq 0, t\geq 0, \\
v(0,x) &=& \overline{f}_0(x),  \quad u(0,x) = \overline{g}_0(x), & & x\geq 0, t\geq 0, \\
v_x(0,x) &=& 0 & & x \geq 0.
\end{array}\right.
\end{equation}
Regarding the parameters, we work 
in the same set as in \cite{cerpa2023approximation}. 

\begin{definition}
The parameters $\beta, \gamma$ are said to be admissible if 
\begin{equation}\label{parameters}
\frac{(1-\gamma)^3}{\gamma^3}+\frac{9}{4} \frac{\beta^2}{\gamma^2}>0 \text { and if } \exists \delta, 0<\delta<1 / 16 \text { such that }\left(1+\frac{1}{\delta \gamma}\right)^{1 / 2}\left(2-\frac{3+1 / \delta}{\gamma}\right)+3 \frac{\beta}{\gamma}>0 .
\end{equation}
\end{definition}

The following result, proven in \cite{cerpa2023approximation}, tells us that the previous definition makes sense.

\begin{lemma}\label{parameters-lemma}
(See \cite[Lemma 8]{cerpa2023approximation}).
The set of admissible parameters is non-empty and for any $0<\delta<1 / 16$ it contains the set
$$
\left\{(\beta, \gamma): \gamma \geq \frac{2 \delta+1}{\delta}, \beta \geq \frac{2}{3} \gamma\right\} .
$$

Additionally, if $\beta, \gamma$ are admissible then we have the following:
\begin{itemize}
    \item System \eqref{centerequation} has a unique solution $\left(v_0, u_0\right)$.

\item  For $\beta, \gamma$ admissible, let $\delta>0$ be such that \eqref{parameters} is fulfilled. Then the corresponding solution $\left(v_0, u_0\right)$ satisfies the bounds
$$
\min \{-\beta,-\sqrt{3}\} \leq v_0<-\sqrt{1+\frac{1}{\delta \gamma}}<0 .
$$

These bounds readily imply that
$$
\frac{1}{\max \left\{\beta^2-1,2\right\}} \leq \frac{1}{v_0^2-1}<\delta \gamma
.$$
\end{itemize}
\end{lemma}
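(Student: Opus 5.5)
Eliminating $u_0$ via the second equation of \eqref{centerequation}, $u_0=(v_0+\beta)/\gamma$, reduces the system to the cubic equation
\begin{equation*}
P(v):=\frac{v^3}{3}-\Big(1-\frac1\gamma\Big)v+\frac{\beta}{\gamma}=0 .
\end{equation*}
Every claim then becomes a statement about the real roots of $P$, which I will establish with elementary calculus and sign arguments.

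\textbf{Uniqueness.} I will use the discriminant of the depressed cubic. Multiplying by $3$ gives $v^3+pv+q=0$ with $p=-3(1-1/\gamma)$ and $q=3\beta/\gamma$. There is exactly one real root iff $(q/2)^2+(p/3)^3>0$, that is,
\begin{equation*}
\frac{9}{4}\frac{\beta^2}{\gamma^2}-\Big(1-\frac1\gamma\Big)^3>0 .
\end{equation*}
This should be exactly the first admissibility inequality in \eqref{parameters}, modulo the paper's normalization of the $(1-\gamma)^3/\gamma^3$ term. If the normalizations do not match literally, I will instead argue monotonicity: either $1-1/\gamma\le0$, in which case $P$ is increasing, or the local minimum of $P$ at $v=\sqrt{1-1/\gamma}$ is positive. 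In either case the unique root is negative, because $P(0)=\beta/\gamma>0$ and $P$ is positive on $[0,\infty)$.

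\textbf{Bounds on $v_0$.} Set $a=\sqrt{1+1/(\delta\gamma)}$. A direct computation gives
\begin{equation*}
3P(-a)=-a^3+3\Big(1-\frac1\gamma\Big)a+\frac{3\beta}{\gamma}=a\Big(2-\frac{3+1/\delta}{\gamma}\Big)+\frac{3\beta}{\gamma}>0
\end{equation*}
by the second condition in \eqref{parameters}. Since $P(v)\to-\infty$ as $v\to-\infty$ and the real root is unique, this yields $v_0<-a$. For the lower bound I will check that $P(\min\{-\beta,-\sqrt3\})\le0$. At $v=-\sqrt3$ one gets $P=(\sqrt3+\beta)/\gamma-\sqrt3+\sqrt3=(\sqrt3+\beta)/\gamma$, which is positive, so $\sqrt3$ alone does not work. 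I therefore expect this to be the main obstacle: choosing the right comparison point.

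My first attempt will be to write $P(v)=\tfrac{v^3}{3}-v+\tfrac{v+\beta}{\gamma}$. For $v\le-\beta$ the last term is nonpositive, and for $v\le-\sqrt3$ the term $v^3/3-v=v(v^2-3)/3$ is nonpositive. Hence at $v=\min\{-\beta,-\sqrt3\}$ both pieces are $\le0$, so $P\le0$ there, giving $v_0\ge\min\{-\beta,-\sqrt3\}$ by uniqueness and monotonic sign change.

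\textbf{Consequences.} From $v_0<-a$ we get $v_0^2-1>1/(\delta\gamma)$, so $1/(v_0^2-1)<\delta\gamma$. From $|v_0|\le\max\{\beta,\sqrt3\}$ we get $v_0^2-1\le\max\{\beta^2-1,2\}$, which gives the other inequality.

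\textbf{Explicit set.} For the set $\gamma\ge(2\delta+1)/\delta$, $\beta\ge\tfrac23\gamma$, it suffices to verify the two inequalities in \eqref{parameters}. Here $\gamma\ge2+1/\delta$ gives $2-(3+1/\delta)/\gamma\ge-1/\gamma$. The second expression is then at least $-a/\gamma+3\beta/\gamma\ge(2\gamma-a)/\gamma>0$, since $a<2<\gamma$. The first inequality holds because $\tfrac94\beta^2/\gamma^2\ge1>(1-1/\gamma)^3$. Non-emptiness follows immediately.
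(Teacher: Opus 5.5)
Your proof is correct and complete. The paper gives no argument of its own (it simply cites \cite[Lemma 8]{cerpa2023approximation}), and the two conditions in \eqref{parameters} are exactly the facts you use, so your route is the natural one. The first is the discriminant condition for $v^3-3(1-\frac{1}{\gamma})v+\frac{3\beta}{\gamma}$; note that $\frac{(1-\gamma)^3}{\gamma^3}=-(1-\frac{1}{\gamma})^3$ identically, so your normalization hedge is unnecessary. The second is the sign condition $3P\bigl(-\sqrt{1+1/(\delta\gamma)}\bigr)>0$. The only slip is in your exploratory remark: $P(-\sqrt3)=(\beta-\sqrt3)/\gamma$, not $(\beta+\sqrt3)/\gamma$. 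This is harmless, because your final comparison at $\min\{-\beta,-\sqrt3\}$ is correct.
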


Considering the functional spaces, let us recall the following norms and notations.

\begin{definition}
For any $f \in C\left([0, T] ; B C^0(\R^+)\right)$ we consider the norm
$$
\|f\|_Y=\sup _{0<t<T} \sup _{x \in \mathbb{R^+}}|f(t,x)| .
$$
Additionally, for a fixed $t$, we denote
$$
\|f(t,\cdot)\|_{L_x^{\infty}}=\sup _{x \in \mathbb{R^+}}|f(t,x)| .
$$
\end{definition}
\section{Main Results}
In order to study \eqref{centeredfhn}, we will use an averaging approach. The whole paper is about giving a rigorous framework to justify the approximations
$$v\approx V+w\quad \text{ and }\quad u\approx U,$$
where $(V,U)$ is the solution to what we call the Averaged System
\begin{equation}\label{partially-avg-system}
\left\{\begin{aligned}
V_t - V_{xx} =& V - V^3/3-V^2v_0 - Vv_0^2 - U ,  & & x\geq 0, t\geq 0, \\
U_t - d U_{xx} =& \varepsilon(V  - \gamma U) & & x\geq 0, t\geq 0, \\
V(0,x) = \overline{f}_0(x)&, \, U(0,x) = \overline{g}_0(x), & & x \geq 0,
\end{aligned}\right.
\end{equation}
and $w$ is the solution of the system
\begin{equation}\label{highfrequencyequation}
    \left\{\begin{array}{ll}
         w_t - w_{xx} = \dot{h}(t)\psi(x) + \left(1-(v_0-h(t)\psi(x))^2 \right)w, & x\geq 0, t\geq 0,\\ w(0,x) = 0, & x\geq 0,\\
         w_x(t,0) = 0, & t\geq 0,
    \end{array}\right.
\end{equation}
Our first main result establishes the stability of system \eqref{partially-avg-system}.
\begin{theorem}\label{stability-avg-system}
(Existence and stability for the Averaged System.) Let $\mathcal{B}$ denote either the space $\mathcal{B}=W^{k, p}(\mathbb{R^+})$ with $k, p \geq 1, k p>1$ or the space $\mathcal{B}=B C^0(\mathbb{R^+}) \cap L^p(\mathbb{R^+})$ with $p \geq 1$. Let $\varepsilon>0$ and let $\beta, \gamma$ be such that \eqref{parameters} holds. Let $\left(v_0, u_0\right) \in \mathbb{R}^2$ be the unique solution of \eqref{centerequation}. Given any open neighborhood $\mathcal{O}$ of $(0,0)$ there exists a contracting rectangle $R=[-L, L] \times[-S, S] \subset \mathcal{O}$ such that if the initial data $\left(f_0(x)-v_0, g_0-u_0\right) \in \mathcal{B} \times \mathcal{B}$ lies within R, then the system \eqref{partially-avg-system} with initial data $\left(f_0(x)-v_0, g_0(x)-u_0\right)$ has a unique solution $(V, U) \in$ $C([0, \infty) ; \mathcal{B} \times \mathcal{B})$ satisfying
$$
(V(t,x), U(t, x)) \in R, \quad \forall x \in \mathbb{R}^+, \quad t \geq 0 .
$$
\end{theorem}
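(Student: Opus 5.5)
Local existence and uniqueness in $C([0,T);\mathcal{B}\times\mathcal{B})$ comes first. I will write \eqref{partially-avg-system} in mild (Duhamel) form, using the Neumann heat semigroup on $\R^+$ for $V$ and the corresponding semigroup with diffusion $d$ for $U$; when $d=0$ the $U$-equation is just a linear ODE in $t$ for each fixed $x$. The nonlinearity $F(V,U)=V-V^3/3-V^2v_0-Vv_0^2-U$ is a polynomial. Both admissible choices of $\mathcal{B}$ embed into $BC^0$: for $W^{k,p}$ this uses $kp>1$, and for the other choice it holds by definition. Each $\mathcal{B}$ is also an algebra up to the usual estimates, so $F$ is locally Lipschitz on $\mathcal{B}\times\mathcal{B}$. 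A standard fixed point argument then gives a unique local solution. That solution can be continued as long as $\|V\|_{L^\infty_x}+\|U\|_{L^\infty_x}$ stays bounded, because the $\mathcal{B}$-norm grows at most exponentially with a rate controlled by the $L^\infty$ bound (Gronwall in the mild formulation). Global existence and the conclusion $(V,U)\in R$ will therefore both follow once I show that $R$ is invariant.

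For invariance I will use the contracting rectangles method (Chueh–Conley–Smoller, as in \cite{SCHONBEK1978119,RAUCH197812}). A rectangle $R=[-L,L]\times[-S,S]$ is invariant for a system with decoupled diagonal diffusion and Neumann boundary conditions provided the reaction vector field points strictly inward on $\partial R$. This means:
\begin{itemize}
\item $F(L,U)<0$ and $F(-L,U)>0$ for all $|U|\le S$;
\item $\varepsilon(V-\gamma S)<0$ and $\varepsilon(V+\gamma S)>0$ for all $|V|\le L$.
\end{itemize}
The second pair holds as soon as $\gamma S>L$. For the first pair, note $F(L,U)=L(1-v_0^2)-L^2v_0-L^3/3-U$. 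By Lemma \ref{parameters-lemma} we have $v_0^2-1>1/(\delta\gamma)>0$ and $v_0<0$. So, for small $L$, we get $F(L,U)\le -L(v_0^2-1)+L^2|v_0|+S$, and we need $S<L(v_0^2-1)-L^2|v_0|$. At $-L$ we get $F(-L,U)= L(v_0^2-1)-L^2v_0+L^3/3-U$. Since $v_0<0$, the term $-L^2v_0$ is positive and this side is easier.

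Both conditions are satisfiable if we pick $S$ with $L/\gamma<S<L(v_0^2-1)-CL^2$. Such an $S$ exists for small $L$ exactly when $\gamma(v_0^2-1)>1$, i.e. $1/(v_0^2-1)<\gamma$. The lemma gives the stronger bound $1/(v_0^2-1)<\delta\gamma$ with $\delta<1/16$. Taking $L$ small enough that $R\subset\mathcal{O}$ and the $O(L^2)$ terms are absorbed produces a rectangle on which the field points strictly inward.

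The step I expect to be the main obstacle is the rigorous invariance argument on the unbounded half-line with Neumann data and possibly $d=0$. The classical maximum-principle proof needs the first exit point to be attained, which can fail on $\R^+$. I would handle this as follows:
\begin{itemize}
\item Introduce the shrinking family $R_\eta=[-L_\eta,L_\eta]\times[-S_\eta,S_\eta]$ with slightly smaller sides, or equivalently perturb by $\eta e^{t}$.
\item Argue by contradiction at the first time $t^*$ where $\sup_x V=L_\eta$, say. Since $V(t)\in\mathcal{B}$ decays or is uniformly continuous at infinity (it tends to $0$ in the $L^p$ and $W^{k,p}$ cases), the supremum is either attained at some $x^*$ or approached along $x\to\infty$. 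The latter is excluded because the solution tends to $0$, which lies inside $R$.
\item At an interior maximum, $V_{xx}\le0$. At $x=0$, reflect evenly using $V_x(t,0)=0$.
\item Combining these with strict inwardness gives $V_t<0$, which is a contradiction.
\item The $U$-component is identical; when $d=0$ this is a pointwise ODE comparison.
\end{itemize}
Finally, letting $\eta\to0$ gives invariance of $R$, hence the a priori $L^\infty$ bound and global existence.
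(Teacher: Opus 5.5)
Your proposal is correct and has the same architecture as the paper's proof. Your face computations are those of Lemma~\ref{lemma-existence-smallrectangles}: the window $L/\gamma<S<L(v_0^2-1)-L^2|v_0|$ is nonempty for small $L$ because Lemma~\ref{parameters-lemma} gives $\gamma(v_0^2-1)>1/\delta>1$. Your mild-form local existence, with continuation controlled by the $L^\infty$ norm, is Theorem~\ref{theorem-local-solution}.

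The two proofs differ only in the invariance step. The paper measures the solution by the gauge $\|(V,U)\|_X=\max\{\sup_x|V|/L,\ \sup_x|U|/S\}$. It then imports from \cite[Lemma 3.4]{SCHONBEK1978119} that the right Dini derivative of this gauge is negative at level $1$. Arguments of this Rauch--Smoller type can be run directly on the integral equation, because the Neumann kernel is positive with unit mass and so preserves the convex sets $rR$. They therefore need neither an attained maximum in $x$ nor pointwise $C^{1,2}$ regularity.

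You instead use a first-touching-point maximum principle, which has several advantages:
\begin{itemize}
\item Decay at infinity, which holds in both admissible spaces, gives an attained maximum.
\item The even reflection treats $x^*=0$ explicitly; the paper's own touching argument in the proof of Theorem~\ref{theorem-global-solution} mishandles this case.
\item The case $d=0$ reduces to a pointwise ODE comparison.
\item Using $V_t(t^*,x^*)\ge 0$ from the left at the first touching time is cleaner than the paper's step, which infers behaviour on $(\bar t-\epsilon,\bar t]$ from a right Dini derivative.
\end{itemize}
The cost of your route is that you should add a sentence on why the solution is classical for $t>0$. This follows from analytic-semigroup smoothing on $BC^0$, since $F(V,U)$ is H\"older in time there, even when $d=0$ and $U$ is merely continuous in $x$.

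One correction: the auxiliary rectangles should be slightly \emph{larger}, not smaller. The face inequalities are strict and open in $(L,S)$, so nearby rectangles with $L'>L$, $S'>S$ are still contracting. They contain data from the closed $R$ in their interior, and intersecting them gives invariance of $R$. For data interior to $R$, as the paper assumes via $\|U_0\|_X<1$, no perturbation is needed at all.

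Finally, there is a caveat you share with the paper. For $\mathcal{B}=W^{k,p}$ with $k\ge 2$, the free Neumann evolution of $V_0$ is not continuous at $t=0$ in $W^{k,p}(\R^+)$ unless compatibility conditions such as $f_0'(0)=0$ hold. The reason is that the even extension adds $2V_0'(0)$ times the heat kernel to $\partial_x^2$ of the solution. So the ``standard fixed point'' in $C([0,T);\mathcal{B})$ tacitly requires these conditions.
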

The second main result establishes how the Averaged System \eqref{partially-avg-system} can be used to approximate the solution of system \eqref{ecuacionfhninternal}.
\begin{theorem}\label{theorem-approximation}
(Existence and approximation for the original system.) Let $\mathcal{B}$ denote the space $\mathcal{B}=W^{k, p}(\mathbb{R}^+)$ with $k \geq 3, p \geq 1$. Let $\varepsilon>0$ and $\beta$, $\gamma$ be such that \eqref{parameters} holds. Let $\left(v_0, u_0\right) \in \mathbb{R}^2$ be the unique solution of \eqref{centerequation}. Given any open neighborhood $\mathcal{O}$ of $(0,0)$ and any $\mu>0$, there exist $N>0$ and a contracting rectangle $R=[-L, L]\times[-S, S] \subset \mathcal{O}$ such that if the lift off function $\psi$ and $w$ solution of \eqref{highfrequencyequation} satisfy
$$
A \norm{\psi}_Y + A \norm{w}_Y\leq N
$$
and the initial data $\left(f_0(x)-v_0, g_0(x)-u_0\right) \in \mathcal{B} \times \mathcal{B}$ lies within $R$, then, for all $\omega$ large enough the solution $(f, g)$ of system \eqref{ecuacionfhninternal} with initial data $\left(f_0, g_0\right)$ exists in $C([0, \infty), \mathcal{B} \times \mathcal{B})$ and can be approximated using the solution $(V, U)$ of the  Averaged System \eqref{partially-avg-system} with initial data ($f_0- v_0, g_0-u_0$) in the following way
$$
\begin{array}{rll}
\left|f(t,x) + h(t)\psi(x)-\left(v_0+V(t,x)+w(t,x)\right)\right| \leq \mu, & \forall x \in \mathbb{R}^+, & t\geq 0, \\
\left|g(t,x)-\left(u_0+U(t,x)\right)\right| \leq \mu, & \forall x \in \mathbb{R}^+, & t\geq 0.
\end{array}
$$
\end{theorem}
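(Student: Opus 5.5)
The plan is to compare the solution $(v,u)$ of \eqref{centeredfhn} with $(V+w,U)$ and to close a bootstrap in the sup norm, combining the contracting-rectangle control of Theorem \ref{stability-avg-system} with averaging in time. Local existence and uniqueness of $(v,u)$ in $C([0,T],\mathcal{B}\times\mathcal{B})$ is standard: semilinear parabolic theory on the half-line with a Neumann condition, plus the embedding $W^{k,p}\subset BC^{k-1}$ for $k\ge 3$. Global existence will then follow from the a priori sup bounds obtained below.

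Set $z=v-V-w$ and $r=u-U$. Subtract \eqref{partially-avg-system} and \eqref{highfrequencyequation} from \eqref{centeredfhn}. Denote by $F(V)=V-V^3/3-V^2v_0-Vv_0^2$ the nonlinearity of the averaged system. One obtains
\[
z_t-z_{xx}=F(V+z+w-h\psi)-F(V)-\bigl(1-(v_0-h\psi)^2\bigr)w-h\psi''-h\psi-r+\text{(terms from the }h\psi\text{ expansion)},
\]
\[
r_t-dr_{xx}=\varepsilon(z-\gamma r)+\varepsilon(w+h\psi).
\]
The $\dot h\psi$ forcing has been absorbed entirely into $w$. All remaining source terms are of two kinds: either $O(A(\|\psi\|_Y+\|w\|_Y))=O(N)$, or products that are linear in $h$ or in $w$ with slowly varying coefficients. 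Linearizing $F$ around $V$ gives $F'(V)=1-(V+v_0)^2$. Because $V$ stays in the small rectangle $R$ and $v_0^2-1>1/(\delta\gamma)>0$ by Lemma \ref{parameters-lemma}, this coefficient is strictly negative, $\le -c<0$. Hence the $(z,r)$ system is a damped linear system with small nonlinear perturbations. I would then apply the contracting-rectangle argument of Theorem \ref{stability-avg-system}, in its version with variable coefficients in $(t,x)$, to the pair $(z,r)$. The goal is to show that a rectangle $[-\mu,\mu]\times[-\mu,\mu]$ is invariant: on its edges the damping $-c\,z$ and $-\varepsilon\gamma r$ should dominate the forcing.

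There is one difficulty. The terms that are $O(N)$ in sup norm are harmless once $N$ is chosen small relative to $\mu c$. The terms linear in $h$, however, such as $-h\psi''$, $-h\psi$, $v_0^2h\psi$ and $\varepsilon h\psi$, are of size $A$, which is not assumed small relative to $\mu$. What saves us is that they have zero time average. To exploit this, I would remove them with a near-identity change of variables. Define $H(t)=\int_0^t h=A(1-\cos\omega t)/\omega$; the non-oscillating part is only a small constant-in-time offset, which I would subtract or absorb. Then set $\tilde z=z-H(t)\Phi(x)$, with $\Phi$ collecting the $x$-profiles $\psi'',\psi$ multiplying $h$, and similarly for $r$. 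The new sources are $H\Phi''$ and $H\cdot(\text{bounded})$, which are $O(A/\omega)$. The terms in $w$ that are not already $O(N)$, namely $(v_0-h\psi)^2w$ compared with $w$ times the coefficients appearing in $F$, and $\varepsilon w$ in the $r$ equation, are bounded by $\|w\|_Y\le N/A$ times $A$ and are therefore $O(N)$. The cross terms $2(V+v_0)h\psi\,z$ do not average out, but they are bounded by $O(N)|z|$ and can be absorbed into the damping.

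The main obstacle is this averaging step. It has to be carried out uniformly on $[0,\infty)$ and with sup-norm control compatible with the maximum principle underlying contracting rectangles. The corrector terms require $\Phi\in BC^2$, which follows from $\psi\in C^2_0$ (and is where $k\ge3$ helps in the regularity of $V$). The constant part of $H$ is $O(A/\omega)$ as well, so for $\omega\ge\omega_0(A,\mu)$ every remaining source is at most $c\mu/4$.

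Finally, choose $R$ from Theorem \ref{stability-avg-system}, shrinking it so that $F'(V)\le -c$ on it. Choose $N$ so that the $O(N)$ sources and the $O(N)$ coefficient perturbations are below $c\mu/4$, and then take $\omega$ large. The invariance of the $\mu$-rectangle for $(\tilde z,\tilde r)$ then yields the claimed estimates for all $t\ge0$, after translating back through $f=\overline f-h\psi$ and $\overline f=v+v_0$; the correction $H\Phi$ is itself $O(A/\omega)$.
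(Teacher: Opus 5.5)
Your route differs from the paper's. You use an invariant rectangle for the nonlinear error together with a near-identity averaging transformation. The paper does no averaging at the error level. It solves the linearized error system \eqref{linear-error-equation} by Picard iteration in $\norm{\cdot}_Y$, with contraction constant $\alpha(T)<1$ supplied by Lemma \ref{lemma-paramters-not-restrictive}, obtains $\abs{F_v},\abs{F_u}\le AC$, and then quotes Lemma \ref{lemma-nonlinear-error}. All of its smallness comes from the hypothesis. You are right that $-h\psi''$ is not controlled by $A\norm{\psi}_Y$: the paper's bound $\norm{F_v^{(1)}-F_v^{(0)}}_Y\le AC$ tacitly needs $A\norm{\psi''}_\infty$ small. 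So averaging that term out is a sensible addition, and an invariant rectangle would give the bound on all of $[0,\infty)$ at once. The problem lies elsewhere.

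The gap is in the terms linear in $w$ that carry no factor of $h$.
\begin{itemize}
\item The forcing $\varepsilon w$ in the $r$-equation.
\item The source $-(2v_0V+V^2)w$.
\item Most seriously, the perturbation of the damping. The coefficient of $z$ is $1-(v_0+V)^2+\varphi_1$ with $\varphi_1=-2(v_0+V)(w-h\psi)-(w-h\psi)^2$, not $1-(v_0+V)^2$ plus the cross term $2(v_0+V)h\psi$ alone.
\end{itemize}
None of these has a prefactor $A$, so ``$\norm{w}_Y\le N/A$ times $A$'' does not apply to them. From $A\norm{w}_Y\le N$ and Lemma \ref{lemma-bound-high-frquencyterm} you only get $\norm{w}_Y\le\min\{N/A,\,C_1A\}\le\sqrt{C_1N}$, with $C_1$ depending on $\psi$. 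This is not $O(N)$, and it is small only if $N$ may depend on $\psi$, which the statement does not allow. Your corrector $H(t)\Phi(x)$ cannot remove $\varepsilon w$ either, since $w$ is not of that form.

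Without a bound $\norm{w-h\psi}_Y\ll\mu$, the top-edge inequality $\varepsilon(L-\gamma S)+\varepsilon\norm{w+h\psi}_Y<0$ fails for $L,S$ of order $\mu$, and the damping coefficient need not stay negative. The paper gets this smallness by assumption: its proof actually runs under $A\norm{\psi}_Y+\norm{w}_Y<N_1$, which is what makes $M\ge\norm{w-h\psi}_Y$ small in \eqref{condition-for-contraction}. You need the same, either as a hypothesis or via a lemma showing $w\approx h\psi$ for large $\omega$. Indeed, $e=w-h\psi$ solves $e_t-e_{xx}=(1-(v_0-h\psi)^2)e+h\psi''+(1-(v_0-h\psi)^2)h\psi$ with $e(0)=0$ and $e_x(t,0)=h(t)$. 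This is a damped problem with oscillatory data whose non-oscillating part is $O(N^2)$, so one expects $\norm{w-h\psi}_Y\le O(N^2)+o(1)$ as $\omega\to\infty$. That is where large $\omega$ would genuinely enter your argument.

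Smaller, fixable points:
\begin{itemize}
\item The terms $-h\psi$, $v_0^2h\psi$ and $\varepsilon h\psi$ are already bounded by $A\norm{\psi}_Y\le N$; only $-h\psi''$ needs averaging.
\item Since $\Phi$ contains $\psi''$, the new source $H\Phi''$ requires $\psi\in C^4$, not $C^2$. Alternatively, integrate by parts in time in the mild formulation, as in \eqref{integral-estimate-for-w}.
\item The shift gives $\tilde z_x(t,0)=-H(t)\Phi'(0)$. This destroys the homogeneous Neumann condition needed when the extremum sits at $x=0$, unless $\Phi'(0)=0$.
\item Because of the couplings $-r$ and $\varepsilon z$, the square $[-\mu,\mu]^2$ is contracting only if $1/\gamma<1<v_0^2-1$, which need not hold. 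Take an aspect ratio $S/L\in(1/\gamma,v_0^2-1)$ as in Lemma \ref{lemma-existence-smallrectangles} and scale down.
\end{itemize}
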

The paper is organized as follows. In Section 4 we establish several well-posedness results that we use in the paper, starting with the proof of local existence and subsequently proving global existence. Section 5 deduces the Averaged System \eqref{partially-avg-system} and examines its stability properties. Finally, Section 6 presents the approximation results, starting with an analysis of the linear system followed by the nonlinear case.
\section{Well Posedness}
\subsection{Local Solvability}
Concerning the functional framework where our results will hold, we consider a family of spaces similar to the
one considered in \cite{RAUCH197812}.
\begin{definition}
A Banach space $\mathcal{B}$ of functions $w: \mathbb{R}^+\rightarrow \mathbb{R}$ is admissible if the following conditions hold:
\begin{enumerate}
    \myitem{A.1} $\mathcal{B}$ is a subset of $B C^0(\mathbb{R}^+)$ and for $w \in \mathcal{B},\|w\|_{\mathcal{B}} \geq\|w\|_{L^{\infty}}$. 
    \label{A.1}
    
\myitem{A.2} For every vector-valued function $F(U,x,t)$ we have the following property. For any $M>0$ there exist some constants $k_1, k_2>0$ such that for all $x \in \mathbb{R^+}$ and $t \in[0, \infty)$ we have
$$
\begin{gathered}
\|V\|_{\mathcal{B} \times \mathcal{B}} \leq M \text { and }\|W\|_{\mathcal{B} \times \mathcal{B}} \leq M \Rightarrow\|F(V, \cdot, t)-F(W, \cdot, t)\|_{\mathcal{B} \times \mathcal{B}} \leq k_1\|V-W\|_{\mathcal{B} \times \mathcal{B}}, \\
\|V\|_{\infty} \leq M \Rightarrow\|F(V, \cdot, t)-F(0, \cdot, t)\|_{\mathcal{B} \times \mathcal{B}} \leq k_2\|V\|_{\mathcal{B} \times \mathcal{B}}.
\end{gathered}
$$
If the function $F$ is independent of $x$ and $t$, then it also satisfies $F(0)=0$.\label{A.4}
\end{enumerate}
\end{definition}
Some examples of admissible spaces are $\mathcal{B}=W^{k, p}\left(\mathbb{R}^{+}\right)$ and $\mathcal{B}=B C^0(\mathbb{\R^+}) \cap L^p(\mathbb{\R^+})$ with $p \geq 1$.\\

We now consider the following non linear system of equations
\begin{equation}\label{general-equation-fhn}
    U_t = \tilde{A} U_{xx} + F(U) \quad x\geq 0, \, t\geq 0,
\end{equation}
where $U= (u_1,\cdots ,u_n)$ is a real vector, $F$ is a smooth $\R^n$ valued function with $F(0) =0$ and $\tilde{A} = diag\{a_1, \cdots , a_n\}$ is a diagonal matrix where $a_i  > 0$ for $1\leq i \leq p$ and $a_i=0$ for $p < i \leq n$. Let $G(t,x) = diag\{G_1(t,x),G_2(t,x), \cdots , G_n(t,x)\}$, where
$$
\tilde{G_i}(t,x) = \dfrac{1}{\sqrt{4a_i\pi t}}e^{\frac{-x^2}{4a_it}} \quad \text{if} \quad 1\leq i \leq p \quad \text{and} \quad G_i(t,x) = \delta (x) \quad \text{for} \quad p< i \leq n.
$$
For $h=\left(h_1, \ldots, h_n\right)  \in B C^0(\R^+)$ and $g=\left(g_1, \ldots, g_n\right) \subset \mathcal{B},$ let 
\begin{align}
G_i(t, z, x) & =\tilde{G_i}(t, z-x)+\tilde{G_i}(t,-z-x), \quad 1 \leqslant i \leqslant n , \\
H_i(t, x) & =- \Int_0^t h_i(s)  G_i(t-s, x)d s, \quad 1 \leqslant i \leqslant p , \label{integral-solution-for-H}\\
H_i(t, x) & =0, \quad p<i \leqslant n , \label{integral-solution-for-H-with-zero}\\
S_i(t, x) & =\int_0^{\infty} g_i(z) \tilde{G}_i(t, z, x) d z, \quad 1 \leqslant i \leqslant p , \label{integral-solution-for-S}\\
S_i(t, x) & =g_i(x), \quad p<i \leqslant n, \\
S_i(0, x) & =g_i(x), \quad 1 \leqslant i \leqslant n .
\end{align}

We recall that $H_{i}(t, x), 1 \leqslant i \leqslant p$, is the solution of 
\begin{equation}
\left\{\begin{aligned}
\partial_t H_i  &=&a_i\partial_x^2 H_i,  & & x\geq 0, t\geq 0, \\
H_i(0,x) &=& 0,  & & x \geq 0,\\
\partial_x H_i(t,0) &=& h_i(t), & &  t\geq 0,
\end{aligned}\right.
\end{equation}
and $S_i(t, x), 1 \leqslant i \leqslant p$, is the restriction to $x>0$ of the solution to the Cauchy problem
\begin{equation}
\left\{\begin{aligned}
\partial_t S_i  &=&a_i\partial_x^2 S_i,  & & x\in \R, t\geq 0, \\
S_i(0,x) &=& g_i(x)& & x \geq 0,\\
S_i(0,x) &=& g_i(-x)& & x < 0.\\
\end{aligned}\right.
\end{equation}
Let $R(t, x)$ be the unique element of $C([0, \infty) ; \mathcal{B} )$ such that
\begin{equation}
\left\{\begin{aligned}
R_t&=&\tilde{A}R_{x x}, && t>0, x>0, \\
R(0, x)&=&g(x), && x>0, \\
\frac{d}{d x}\left(R_1, \ldots, R_p\right)(t, 0)&=&h(t), && t>0 .
\end{aligned}\right.
\end{equation}

Now, due to Duhamel Principle we can write a solution $U \in C([0, \infty) ; \mathcal{B} \times \mathcal{B} )$ that satisfies \eqref{general-equation-fhn} with initial and boundary data
\begin{align}
U(0, x) & =g(x), \label{initial-condition-for-general-equation}\\
(U_i)_x(t, 0) & =h_i(t), \label{boundary-conditions-for-general-equation}
\end{align}

in the following form
\begin{equation}\label{integral-solution}
U(t, x)=R(t, x)+\int_0^t \int_0^\infty G(t-s, z, x) F(U(s, z)) d z d s . 
\end{equation}

\begin{remark}
$R(t,x)$ has the following form
\begin{equation}\label{equality-for-R}
    R(t,x) = H(t,x) + S(t,x),
\end{equation}
and also using \eqref{integral-solution-for-H}, \eqref{integral-solution-for-H-with-zero}, \eqref{integral-solution-for-S} we see that $R$ satisfies the estimate 
$$\| R \|_{C\left( [0,t_0);\mathcal{B}\right)}\leq \left(m\|h\|_{\infty}+\|g\|_{\infty}\right),$$
where $m^2=\dfrac{1}{4\pi \tilde{a}}$ with $\tilde{a}= \min \left(a_1, \ldots, a_p\right)$.
\end{remark}

We show the existence of the solution for a short time interval, the length of which depends only on $F$, and the norms of the initial and boundary data. This is stated in the following theorem. 
\begin{theorem}\label{theorem-local-solution}
For any $h \in BC^0(\R^+)$ and $g \in C_0(\R^+)$ there is a constant $t_0\in [0,1]$, depending only on $F,\|g\|_{\infty}$ and $\|h\|_{\infty}$, such that the Neumann problem \eqref{general-equation-fhn} with initial data \eqref{initial-condition-for-general-equation} and boundary data \eqref{boundary-conditions-for-general-equation} has a unique solution $U \in C\left(\left[0, t_0\right];\mathcal{B}\times \mathcal{B} \right)$ and
$$
\|U\|_{C\left(\left[0, t_0\right];\mathcal{B}\times \mathcal{B}\right)} \leqslant 2\left(m\|h\|_{\infty}+\|g\|_{\infty}\right).
$$
\end{theorem}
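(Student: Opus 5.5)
We will prove Theorem~\ref{theorem-local-solution} by a Banach fixed point argument applied to the Duhamel formulation \eqref{integral-solution}. Set $K:=m\|h\|_{\infty}+\|g\|_{\infty}$ and, for $t_0\in(0,1]$ to be chosen, consider the closed set
$$
X_{t_0}=\Big\{U\in C\big([0,t_0];\mathcal{B}\times\mathcal{B}\big):\ \|U\|_{C([0,t_0];\mathcal{B}\times\mathcal{B})}\leq 2K\Big\},
$$
which is a complete metric space with the sup-in-time $\mathcal{B}\times\mathcal{B}$ norm. On $X_{t_0}$ define
$$
\Phi(U)(t,x)=R(t,x)+\int_0^t\int_0^\infty G(t-s,z,x)F(U(s,z))\,dz\,ds .
$$
A fixed point of $\Phi$ is a mild solution of \eqref{general-equation-fhn} satisfying \eqref{initial-condition-for-general-equation}--\eqref{boundary-conditions-for-general-equation}. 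The even-reflected kernel $G_i(t,z,x)=\tilde G_i(t,z-x)+\tilde G_i(t,-z-x)$ produces zero Neumann data, so the boundary flux $h$ is carried entirely by $R$.

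The first step is to record the linear estimates for the solution operator. By the Remark after \eqref{equality-for-R}, $\|R\|_{C([0,t_0);\mathcal{B})}\leq K$. For the inhomogeneous term we need that the reflected heat semigroup $e^{t\tilde A\partial_x^2}$ (Neumann), and the identity in the components with $a_i=0$, are contractions on $\mathcal{B}$ and strongly continuous in time. For $\mathcal{B}=W^{k,p}$ or $BC^0\cap L^p$ this holds because the reflected kernel is a probability density in $z$, and in the Sobolev case derivatives commute with the even extension when compatibility holds. Granting this, Minkowski's inequality gives
$$
\Big\|\int_0^t\int_0^\infty G(t-s,z,\cdot)F(U(s,z))\,dz\,ds\Big\|_{\mathcal{B}\times\mathcal{B}}\leq \int_0^t\|F(U(s))\|_{\mathcal{B}\times\mathcal{B}}\,ds .
$$

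The second step is the self-map and contraction estimates, using \ref{A.4}. Take $M=2K$; this also bounds $\|U\|_\infty$ via \ref{A.1}. Since $F(0)=0$, \ref{A.4} gives $\|F(U(s))\|\leq k_2\|U(s)\|\leq 2k_2K$, so $\|\Phi(U)\|\leq K+2t_0k_2K\leq 2K$ as soon as $t_0\leq 1/(2k_2)$. For $U,W\in X_{t_0}$ the Lipschitz part of \ref{A.4} gives
$$
\|\Phi(U)-\Phi(W)\|\leq t_0k_1\sup_s\|U(s)-W(s)\|,
$$
which is a contraction if $t_0\leq 1/(2k_1)$. We therefore choose $t_0=\min\{1,1/(2k_1),1/(2k_2)\}$. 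The constants $k_1,k_2$ depend only on $F$ and on $M=2K$, hence only on $F$, $\|g\|_\infty$ and $\|h\|_\infty$, as the statement requires. Continuity in time of $\Phi(U)$ with values in $\mathcal{B}\times\mathcal{B}$ follows from strong continuity of the semigroup together with dominated convergence in the Duhamel integral. The fixed point gives existence and the bound $2K$. Uniqueness in the whole class $C([0,t_0];\mathcal{B}\times\mathcal{B})$, and not only in $X_{t_0}$, follows from a Gronwall argument: two solutions are bounded by a common $M'$, and the Lipschitz estimate with constant $k_1(M')$ gives $\|U(t)-W(t)\|\leq k_1\int_0^t\|U-W\|$, hence $U=W$.

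The main obstacle is the first step: justifying that the Neumann heat semigroup and the boundary-lift term $R$ are bounded in the $\mathcal{B}$-norm, not merely in $L^\infty$. The Remark states the bound with $\|h\|_\infty,\|g\|_\infty$ on the right. In $W^{k,p}$, however, one really needs $\|g\|_{\mathcal{B}}$, and for $H$ some regularity of $h$ in $t$, since $\partial_x^j H$ involves time derivatives of $h$. I would first try to use the Remark exactly as stated, reading it as the paper's convention for the size of data. If that does not suffice, I would replace $K$ by $m\|h\|_\infty+\|g\|_{\mathcal{B}}$ and keep the $\|\cdot\|_\infty$ dependence only through the choice of $M$ in \ref{A.4}. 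The contraction mechanism above is unchanged either way.
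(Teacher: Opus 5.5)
\textbf{There is a genuine gap: your argument does not give an existence time depending only on $F$, $\|g\|_\infty$ and $\|h\|_\infty$.}

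Your contraction argument is, up to cosmetic choices, the first half of the paper's proof. The paper contracts on $\{\|U(t)-R(t)\|_{\mathcal{B}\times\mathcal{B}}\le m\|h\|_\infty+\|g\|_\infty\}$ with a cruder kernel bound carrying a $(t-s)^{-1/2}$ factor. You use the ball of radius $2K$ and contractivity of the Neumann semigroup. Both close.

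The problem is the point you flag and then set aside. Your contraction constant $k_1$ comes from the \emph{first} clause of (A.2). That clause is a Lipschitz bound valid on a ball of radius $M$ in the $\mathcal{B}\times\mathcal{B}$-norm. Since $\|R(t)\|_{\mathcal{B}}$ is controlled by $\|g\|_{\mathcal{B}}$ (and by time regularity of $h$), not by $\|g\|_\infty$, the radius $M$ depends on $\|g\|_{\mathcal{B}}$. Hence so do $k_1$ and $t_0=\min\{1,1/(2k_1),1/(2k_2)\}$.

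Your fallback, keeping the $L^\infty$ dependence ``only through the choice of $M$'', does not rescue this. Only the \emph{second} clause of (A.2) is triggered by an $L^\infty$ bound, and it gives linear growth $\|F(V)\|_{\mathcal{B}\times\mathcal{B}}\le k_2\|V\|_{\mathcal{B}\times\mathcal{B}}$, not a contraction. So the mechanism is not ``unchanged either way''. As written, you prove local existence on an interval whose length depends on $\|g\|_{\mathcal{B}}$. That is weaker than the statement, and it is not what the continuation argument behind Theorem~\ref{theorem-global-solution} needs, since only sup-norm a priori bounds are available there.

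The missing idea is the paper's second stage.
\begin{enumerate}
\item Run your contraction in $BC^0\times BC^0$. There $\|R(t)\|_\infty$ is genuinely controlled by $\|h\|_\infty$ and $\|g\|_\infty$ for $t\le 1$, and the Lipschitz constant depends only on the sup-norm radius $2K$. This yields $V\in C([0,t_1];BC^0\times BC^0)$ with $t_1=t_1(F,\|g\|_\infty,\|h\|_\infty)$ and $\|V(t)\|_\infty\le 2K$.
\item Show that $V$ is in fact $\mathcal{B}\times\mathcal{B}$-valued on all of $[0,t_1]$. Because $\|V(t)\|_\infty\le 2K$, the second clause of (A.2) gives $\|F(V(s))\|_{\mathcal{B}\times\mathcal{B}}\le k_2\|V(s)\|_{\mathcal{B}\times\mathcal{B}}$ with $k_2=k_2(F,2K)$.
\item Apply Gronwall to the Duhamel formula. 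This bounds $\|V(t)\|_{\mathcal{B}\times\mathcal{B}}\le C$ on any $[0,t_2]\subset[0,t_1]$ where $V$ is $\mathcal{B}\times\mathcal{B}$-valued. The constant $C$ may depend on $\|g\|_{\mathcal{B}}$ but not on $t_2$.
\item Your $\mathcal{B}$-level argument then gives a continuation step $\eta=\eta(C,F)$ independent of $t_2$. Uniqueness in $BC^0$ identifies each continuation with $V$, and finitely many steps reach $t_1$.
\end{enumerate}

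Along this route, $2(m\|h\|_\infty+\|g\|_\infty)$ is a bound on $\|U(t)\|_\infty$. As a bound on the $\mathcal{B}\times\mathcal{B}$-norm it cannot hold in general: already at $t=0$ with $h=0$ it would force $\|g\|_{\mathcal{B}}\le 2\|g\|_\infty$. So your suspicion about the Remark is justified, but it concerns the size estimate, not the existence time.
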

\begin{proof}
We first show that there is a $t_0>0$ depending only on $\left\|g\right\|_{\infty}$, $\|h\|_{\infty}$ and $F$ such that \eqref{general-equation-fhn} has a unique solution in $C([0, t]; \mathcal{B}\times \mathcal{B} )$ and $\|U\|_{C\left([0, t] \mathcal{B}\times \mathcal{B}\right)} \leqslant 2\left(m\|h\|_{\infty}+\|g\|_{\infty}\right) $. For any $t_0>0$, let
$$
\Omega=\left\{U \in C\left(\left[0, t_0\right];\mathcal{B}\times \mathcal{B}\right):\left\|U(t)-R(t,\cdot) \right\|_{\mathcal{B}\times \mathcal{B}} \leqslant\left\|g\right\|_{\infty}+m\|h\|_{\infty}, 0 \leqslant t \leqslant t_0\right\} .
$$

If $U \in \Omega$, then $\|U(t)\|_{\mathcal{B}\times \mathcal{B}} \leqslant 2\left(m\|h\|_{\infty}+\|g\|_{\infty}\right)$, for $0 \leqslant t \leqslant t_0$, so by \ref{A.4}, we may choose a constant $k$, independent of $t_0$ such that for $U, V \in \Omega$,

\begin{equation}\label{lipschitz-property-middleproof-localsolvae}
\|F(U(t))-F(V(t))\|_{C\left(\left[0, t_0\right];\mathcal{B}\times \mathcal{B}\right)} \leqslant k\|U(t)-V(t)\|_{C\left(\left[0, t_0\right];\mathcal{B}\times \mathcal{B}\right)} .
\end{equation}

Let $t_0=\dfrac{-\frac{4k}{\sqrt{\tilde{a}\pi}}+
\sqrt{\frac{16k^2}{\tilde{a}\pi}+8k}}{4k}$ with $\tilde a=\min \left(a_1, \ldots, a_p\right)$. Thus, this $t_0$ clearly depends only on $F$ and $\left\|g\right\|_\infty $. We define a map $\Gamma$ from $C\left(\left[0, t_0\right];\mathcal{B}\times \mathcal{B}\right)$ into itself by
$$
\Gamma U(t)=R(t,x)+\Int_0^t G(t-s) * F(U(s)) d s .
$$

We first show that $\Gamma$ maps the closed set $\Omega$ into itself. Using \eqref{lipschitz-property-middleproof-localsolvae} with $V=0$, we have, for $U \in \Omega$
$$
\left\|\Gamma(U)(t)-R(t,x)\right\|_{\mathcal{B}\times \mathcal{B}} \leqslant \dfrac{k}{\sqrt{\tilde{a}\pi}} \int_0^t \dfrac{\|U(s)\|_{\mathcal{B}\times \mathcal{B}}}{(t-s)^{1/2}} d s + k \int_0^t\|U(s)\|_{\mathcal{B}\times \mathcal{B}} ds.
$$
Hence, we obtain for $0 \leq t \leq t_0$,
$$
\left\|\Gamma(U)(t)-R(t,x)\right\|_{\mathcal{B}\times \mathcal{B}} \leqslant \left(\dfrac{4t_0^{1/2}k}{\sqrt{\tilde{a}\pi}} +2kt_0 \right) \left(\left\|g\right\|_{\infty}+m\|h\|_{\infty}\right)=\left\|g\right\|_{\infty} +m\|h\|_{\infty},
$$
so that $\Gamma$ maps $\Omega$ into itself. Next, let us show that $\Gamma$ is a contraction mapping on $\Omega$. If $U, V \in \Omega$, then
$$
\begin{aligned}
\|\Gamma(U)(t)-\Gamma(V)(t)\|_{\mathcal{B}\times \mathcal{B}} & \leqslant \int_0^t\|G(t-s) *(F(U(s))-F(V(s)))\|_{\mathcal{B}\times \mathcal{B}} d s \\
& \leqslant \dfrac{k}{\sqrt{\tilde{a}\pi}} \int_0^t\dfrac{\|U(s)-V(s)\|_{\mathcal{B}\times \mathcal{B}}}{(t-s)^{1/2}} d s  + k \int_0^t\|U(s)\|_{\mathcal{B}\times \mathcal{B}} ds\\
& \leqslant \left(\dfrac{2t_0^{1/2}k}{\sqrt{\tilde{a}\pi}} +kt_0 \right) \|U-V\|_{C\left(\left[0, t_0\right];\mathcal{B}\times \mathcal{B}\right)} \\
& \leqslant \frac{1}{2}\|U-V\|_{C\left(\left[0, t_0\right]; \mathcal{B}\times \mathcal{B}\right)}
\end{aligned}
$$
Taking the supremum on $t \in [0, t_0]$ we obtain that $\Gamma$ is a contraction and therefore it has a unique fixed point in $\Omega$. However, this still leaves open the possibility of finding solutions outside of $\Omega$. We cover uniqueness of the solution in Proposition \ref{proposition-for-uniqueness}.
To complete the proof we must extend the solution to an interval $0 \leqslant t \leqslant t_1$ which depends only on $\left\|g\right\|_{\infty}$. Since $\mathcal{B} \subset B C^0(\R^+)$, the above argument shows that there is a $t_1$ depending only on $F$ and $\left\|g\right\|_{\infty}$ and a solution $V \in C\left(\left[0, t_1\right] ;B C^0(\R^+)\times B C^0(\R^+)\right)$ with $\|V(t)\|_{\infty} \leqslant 2\left\|g\right\|_{\infty} + 2m\left\|h\right\|_{\infty}$ for $t \in\left[0, t_1\right]$. By uniqueness in $C\left(\left[0, t_0\right] ; B C^0(\R^+)\times B C^0(\R^+)\right)$ we have $U=V$ for $0 \leqslant t \leqslant t_0$. To complete the proof it suffices to show that $V \in$ $C\left(\left[0, t_1\right];\mathcal{B}\times \mathcal{B}\right)$; then $V$ provides the desired extension. To prove the regularity of $V$ we will show that there is an $\eta>0$ independent of $t_2 \in\left[0, t_1\right]$ with the property that if $V \in C\left(\left[0, t_2\right]\mathcal{B}\times \mathcal{B}\right)$ then $V \in C\left(\left[0, t_2+\eta\right];\mathcal{B}\times \mathcal{B}\right)$. A finite number of applications of this result implies that $V \in C\left(\left[0, t_1\right];\mathcal{B}\times \mathcal{B}\right)$. The main point is an estimate for $\|V\|_{C\left(\left[0, t_2\right];\mathcal{B}\times \mathcal{B}\right)}$ which is independent of $t_2$. We know that $V$ has the following form
\begin{equation}\label{middleproof-regularitylocal}
V(t, x)=R(t, x)+\int_0^t \int_0^\infty G(t-s, z, x) F(V(s, z)) d z d s.
\end{equation}
By using \ref{A.4} and the fact that $\|V(t)\|_{\infty} \leq 2\left\|g\right\|_{\infty}+2 m\|h\|_{\infty}$, we see that there is a $k_2$ so that
$$
\|F(V(s))\|_{\mathcal{B} \times \mathcal{B}} \leq k_2\|V(s)\|_{\mathcal{B} \times \mathcal{B}}.
$$
Taking norms both sides of \eqref{middleproof-regularitylocal} and applying Gronwall's inequality to
$$
\|V(t)\|_{\mathcal{B}\times \mathcal{B}} \leq \dfrac{1}{t^{1/2}}\|U^0\|_{\mathcal{B}\times \mathcal{B}}+k_2\int_{0}^t\left( \dfrac{1}{\sqrt{\tilde{a}\pi(t-s)}} + 1\right)\|V(s)\|_{\mathcal{B}\times \mathcal{B}} d s
$$
yield a constant $C>0$, independent of $t_2 \in\left[0, t_1\right]$, such that $\|V(t)\|_{\mathcal{B} \times \mathcal{B} } \leq C$ for $0 \leq t \leq t_2$. In particular $\left\|V(t_2)\right\|_{\mathcal{B}\times \mathcal{B} } \leq C$. From the proof of existence for $U\in [0,t_0]$  there exists $\eta>0$ only dependent on $C$ and $F$ so that the system for \eqref{general-equation-fhn} with data of $\mathcal{B} $ norm at most $C$ has a solution in $C([0, \eta] ; \mathcal{B}\times \mathcal{B}  )$. Let $W \in C([0, \eta] ; \mathcal{B}\times \mathcal{B} )$ solve \eqref{general-equation-fhn} with $W(0)=V\left(t_2\right)$ and then define $V\left(t_2+s\right)=W(s)$ for $0 \leqslant s \leqslant \eta$.  Since $\eta>0$ independent of $t_2$ allows us to continue this process until reaching $t_1$ and this completes the proof.
\end{proof}
Using the integral equation \eqref{integral-solution} we are able to show dependence on initial conditions and uniqueness of the solution in the following proposition.
\begin{proposition}\label{proposition-for-uniqueness}
(Uniqueness). The solution of \eqref{general-equation-fhn} is unique in $C\left(\left[0, T\right] ; \mathcal{B}\times \mathcal{B}\right)$ and depends con\-ti\-nuous\-ly on the initial conditions.
\end{proposition}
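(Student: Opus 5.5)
We argue directly from the integral formulation \eqref{integral-solution}. Let $U, V \in C([0,T];\mathcal{B}\times\mathcal{B})$ be two solutions of \eqref{general-equation-fhn} with initial data $g_U, g_V$ and the same boundary data $h$. Both are continuous on the compact interval $[0,T]$, so there is $M>0$ with
$$\sup_{0\le t\le T}\|U(t)\|_{\mathcal{B}\times\mathcal{B}} \le M, \qquad \sup_{0\le t\le T}\|V(t)\|_{\mathcal{B}\times\mathcal{B}}\le M.$$
Property \ref{A.4} then supplies a Lipschitz constant $k_1=k_1(M)$ with
$$\|F(U(s))-F(V(s))\|_{\mathcal{B}\times\mathcal{B}}\le k_1\|U(s)-V(s)\|_{\mathcal{B}\times\mathcal{B}}, \qquad 0\le s\le T.$$
Because $R$ is linear in the data, the boundary contributions $H$ cancel when we subtract the two representations, and only $S_U-S_V$ survives, the part generated by $g_U-g_V$.

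We next estimate the Duhamel term using the same kernel bounds as in the proof of Theorem \ref{theorem-local-solution}. Convolution with $G(t-s,\cdot,x)$ in $\mathcal{B}$ produces a factor $(\tilde a\pi(t-s))^{-1/2}$ for the diffusive components and a factor $1$ for the components with $a_i=0$. Setting $\phi(t)=\|U(t)-V(t)\|_{\mathcal{B}\times\mathcal{B}}$, this yields
$$\phi(t)\le \|S_U(t)-S_V(t)\|_{\mathcal{B}\times\mathcal{B}} + k_1\int_0^t\Big(\frac{1}{\sqrt{\tilde a\pi (t-s)}}+1\Big)\phi(s)\,ds .$$
The heat semigroup with even reflection is a contraction on the admissible spaces, since the reflected Gaussian has unit mass. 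Hence the first term is at most $\|g_U-g_V\|_{\mathcal{B}\times\mathcal{B}}$.

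The main obstacle is closing this inequality, because the kernel $(t-s)^{-1/2}$ is weakly singular and the classical Gronwall lemma does not apply directly. We would use Henry's singular Gronwall inequality, which gives $\phi(t)\le C(T,k_1,\tilde a)\|g_U-g_V\|_{\mathcal{B}\times\mathcal{B}}$ on $[0,T]$. An alternative that avoids this lemma is to substitute the inequality into itself once. Since $\int_s^t (t-r)^{-1/2}(r-s)^{-1/2}\,dr=\pi$, the iterated kernel is bounded, and the standard Gronwall lemma then applies.

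Taking $g_U=g_V$ gives $\phi\equiv 0$, which is uniqueness in $C([0,T];\mathcal{B}\times\mathcal{B})$. This closes the gap left open in Theorem \ref{theorem-local-solution}, since solutions outside $\Omega$ are now covered. In general the same estimate gives Lipschitz, and hence continuous, dependence on the initial data. The constant is locally uniform: if the data $g_V$ vary in a bounded set, the bound $M$ can be fixed, for instance via the a priori bound of Theorem \ref{theorem-local-solution} on each step, so that $k_1$ and therefore $C$ do not change with $g_V$.
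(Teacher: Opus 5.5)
Your proposal is correct and follows essentially the same route as the paper: subtract the two Duhamel representations so that the boundary terms $H$ cancel, use a common bound $M$ and the Lipschitz property of $F$ to get $\|U(t)-V(t)\|_{\mathcal{B}\times\mathcal{B}}\le(\text{data term})+k_1\int_0^t\big(\frac{1}{\sqrt{\tilde a\pi(t-s)}}+1\big)\|U(s)-V(s)\|_{\mathcal{B}\times\mathcal{B}}\,ds$, and close with a Gronwall-type argument. At the closing step you are more careful than the paper: the paper bounds the data term by $t^{-1/2}\|U^0-\tilde U^0\|_{\mathcal{B}\times\mathcal{B}}$ and applies the classical Gronwall lemma directly to the weakly singular convolution kernel, whereas your contraction bound for the reflected heat semigroup, combined with Henry's lemma (or the one-step iteration using $\int_s^t(t-r)^{-1/2}(r-s)^{-1/2}\,dr=\pi$), actually justifies the final estimate.
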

\begin{proof}
    Let $U$ and $\tilde{U}$ be solutions belonging to $C([0, T]; \mathcal{B} \times \mathcal{B})$, with initial conditions $U^0$ and $\tilde{U}^0$, respectively. Then, using \eqref{integral-solution-for-H},\eqref{integral-solution-for-H-with-zero}, \eqref{integral-solution-for-S}, \eqref{equality-for-R} and \eqref{integral-solution} we got for any $t \in[0, T]$
\begin{align*}
U(t)-\tilde{U}(t) &= H(t,x) + G(t) *U^0(0) +\int_0^t G(t-s) *F(U(s)) d s\\
&- H(t,x) - G(t) *\tilde{U}^0(0) +\int_0^t G(t-s) *F(\tilde{U}(s)) d s\\
&= G(t) *\left(U^0(0)-\tilde{U}^0(0)\right)+\int_0^t G(t-s) *(F(U(s))-F(\tilde{U}(s))) d s .  
\end{align*}
Suppose that $\|U(t)\|_{\mathcal{B}\times \mathcal{B}} \leqslant M$ and $\|\tilde{U}(t)\|_{\mathcal{B}\times \mathcal{B}} \leqslant M$ for $0 \leqslant t \leqslant T$, and choose $k$ so that \ref{A.4} holds. Then defining $\tilde{\alpha}(s) = \dfrac{1}{\sqrt{\tilde{a}\pi s}} + 1$, we get
$$
\|U(t)-\tilde{U}(t)\|_{\mathcal{B}\times \mathcal{B}} \leqslant \dfrac{1}{t^{1/2}}\|U^0(0)-\tilde{U}^0(0)\|_{\mathcal{B}\times \mathcal{B}}+ k\int_0^t \tilde{\alpha}(t-s)\|U(s)-\tilde{U}(s)\|_{\mathcal{B}\times \mathcal{B}} d s,
$$
so that Gronwall's inequality yields
$$
\|U(t)-\tilde{U}(t)\|_{\mathcal{B}\times \mathcal{B}} \leqslant \dfrac{e^{k\int_0^t\tilde{\alpha}(t-s)}}{t^{1/2}} \| U^0(0)-\tilde{U}^0(0) \|_{\mathcal{B}\times \mathcal{B}} .
$$
\end{proof}
\begin{remark}\label{remark-for-global-linearequation}
If the system \eqref{general-equation-fhn} is linear, i.e., $F(U)=FU$ for a constant matrix $F$, then for any $g=\left(g_1, \ldots, g_n\right) \in C_0(R^+)$, and $h=\left(h_1, \ldots, h_p\right) \in B C(R^+)$ there is a unique solution $U \in C([0, \infty) ; \mathcal{B} \times \mathcal{B})$ of \eqref{general-equation-fhn} with initial and boundary data \eqref{initial-condition-for-general-equation} and \eqref{boundary-conditions-for-general-equation}, respectively. Furthermore, there are constants $k$ and $c$, independent of $g$ and $h$, such that
$$
\|U(t)\|_{\infty} \leqslant k e^{c t}\left(\|g\|_{\infty}+\frac{\|h\|_{\infty}}{2 \pi^{1 / 2}}\right) .
$$
\end{remark}

\subsection{Global Solvability}
We start this subsection by defining a relevant notion introduced in \cite{RAUCH197812}.
\begin{definition}
(Contracting Rectangle). For $L, S>0$ let $R_{L, S}$ be the rectangle centered at $(0,0)$ defined as
$$
R_{L, S}=[-L, L] \times[-S, S] .
$$

Given a vector-valued function $H: \mathbb{R}^2 \times \mathbb{R} \times[0, T] \rightarrow \mathbb{R}^2$ we say that the rectangle $R_{L, S}$ is contracting under the vector fields $H$ if for each $\vec{v} \in \partial R_{L, S}$ and every outward normal unit vector $n(\vec{v})$ we have
\begin{equation}\label{normal}
    \sup _{x \in \mathbb{R}} n(\vec{v}) \cdot H(\vec{v}, x, t)<0, \quad t \in[0, T].
\end{equation}

\end{definition}
\begin{remark}
    If $\vec{v}$ is in the corner of the rectangle, we assume \eqref{normal} is satisfied for all $n(\vec{v})$ in the closed cone outward normal to the boundary.
\end{remark}

Given this definition we can show the existence of large contracting rectangles that help us to prove the global solvability of the solution of \eqref{ecuacionfhnneuman}. This is stated in the following Lemma.

\begin{lemma}\label{lemma-of-large-rectangle}
    Let $\phi: \mathbb{R}^2 \rightarrow \R$ be such that there exists $M>0$ that satisfies $\sup _{x_i \geqslant 0}|\phi(x, t)| \leq \tilde{M}$. Then, there exists a contracting rectangle $R=R_{L,S}$ for the vector field
    $$
F_\phi(f,g)=\left(f+\phi - \frac{(f+\phi)^3}{3} - g, \varepsilon (f - \gamma g)\right) .
$$
\end{lemma}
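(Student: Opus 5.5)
We construct the rectangle explicitly: first fix $L$ large, then choose $S$ in a window depending on $L$. Throughout, $|\phi(x,t)|\le \tilde M$ uniformly in $x\ge 0$ and $t$. Since $R_{L,S}$ has four sides, condition \eqref{normal} reduces to four scalar inequalities, plus the corner convention, which is automatic once the inequalities on the two adjacent sides hold, because the outward cone is spanned by the two side normals.

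\emph{Vertical sides $f=\pm L$, $|g|\le S$.} The outward normal is $(\pm1,0)$, so we need
\[
\pm\Big(\pm L+\phi-\tfrac{(\pm L+\phi)^3}{3}-g\Big)<0
\]
for all $|\phi|\le\tilde M$ and $|g|\le S$. Take $f=L$. Then $L+\phi\ge L-\tilde M>0$ once $L>\tilde M$. Set $q(s)=s-s^3/3$; it is decreasing for $s\ge1$. The worst case is $\phi=-\tilde M$, $g=-S$, so it suffices that
\[
q(L-\tilde M)+S<0, \qquad\text{i.e.}\qquad S<\frac{(L-\tilde M)^3}{3}-(L-\tilde M),
\]
assuming $L-\tilde M\ge 1$. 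The side $f=-L$ gives the same condition by the odd symmetry of $q$.

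\emph{Horizontal sides $g=\pm S$, $|f|\le L$.} The normal is $(0,\pm1)$, and we need $\pm\varepsilon(f\mp\gamma S)<0$. This is $f<\gamma S$ (resp.\ $f>-\gamma S$) for all $|f|\le L$, i.e.
\[
S>\frac{L}{\gamma}.
\]
Note that $\phi$ does not enter this component, since the vector field is $F_\phi=(\dots,\varepsilon(f-\gamma g))$. This matters because otherwise the $+\varepsilon\phi$ term would need absorbing, which is also easy.

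\emph{Compatibility.} We need $L/\gamma<S<(L-\tilde M)^3/3-(L-\tilde M)$. The right-hand side grows cubically in $L$ while the left grows linearly, so for $L$ large enough (depending only on $\tilde M$ and $\gamma$) the interval is nonempty. We pick $S$ in it, e.g.\ $S=2L/\gamma$ for $L$ large. The only delicate point is keeping all inequalities strict and uniform in $x$ and $t$ (a supremum over $x$ must still be $<0$). The margins obtained are independent of $x$ and $t$, so the supremum in \eqref{normal} is strictly negative.

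Finally, at the corners, a vector in the outward cone is $a(\pm1,0)+b(0,\pm1)$ with $a,b\ge0$ not both zero. Its dot product with $F_\phi$ is a nonnegative combination of two strictly negative quantities, and each of those is controlled by the side estimates, which hold on the closed sides. Hence the corner dot product is negative.
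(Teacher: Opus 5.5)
Your proof is correct and follows the paper's approach. Like the paper, you check the four faces, get $S > L/\gamma$ from the horizontal sides and a cubic-in-$L$ upper bound on $S$ from the vertical sides, then take $L$ large. The only differences are that you bound the vertical sides via monotonicity of $s - s^3/3$ on $[1,\infty)$, which gives the sharper condition $S < (L-\tilde M)^3/3 - (L-\tilde M)$ in place of the paper's term-by-term bound $L + \tilde M + L^2\tilde M + \tilde M^3/3 + S < L^3/3$, and that you treat the corners explicitly, which the paper leaves implicit.
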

\begin{proof}
    See Appendix \ref{appendix-proof-lemma-largerectangle}.
\end{proof}

As we mentioned above, Lemma \ref{lemma-of-large-rectangle} leads to the following existence theorem.

\begin{theorem}\label{theorem-global-solution}
Let $f_0, g_0 \in C_0(R^+)$. If $h \in B C(R^+)$, then there exists a unique solution $H=(f, g) \in C\left([0, \infty);\mathcal{B}\times \mathcal{B}\right)$ to the Neumann problem \eqref{ecuacionfhnneuman}. Furthermore, for any $T \geqslant 0$,
$$
\|H(t)\|_{\infty} \leqslant \theta\left(T,\|h\|_{\infty},\left\|f_0\right\|_{\infty},\left\|g_0\right\|_{\infty}\right), \quad 0 \leqslant t \leqslant T,
$$
where $\theta$ grows at most exponentially in $T$.
\end{theorem}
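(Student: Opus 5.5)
The strategy is the standard one from \cite{RAUCH197812}: combine local existence with an a priori $L^\infty$ bound from invariant (contracting) rectangles, then continue step by step. First I reduce the Neumann problem \eqref{ecuacionfhnneuman} to a problem with homogeneous boundary data. I use the lift off $f=\overline f-h(t)\psi(x)$ as in \eqref{ecuacionfhninternal}, or equivalently $f=\overline f+H_1$ with $H_1$ the heat lift from \eqref{integral-solution-for-H}. The latter has the advantage of requiring only $h\in BC$, with the bound $\|H_1\|_\infty\le m\|h\|_\infty$. Writing $\phi:=H_1$ (bounded by $\tilde M=m\|h\|_\infty$ uniformly in $x,t$), the pair $(\overline f,g)$ solves
\[
\overline f_t=\overline f_{xx}+F_\phi(\overline f,g)_1,\qquad g_t=dg_{xx}+F_\phi(\overline f,g)_2+\varepsilon\beta ,
\]
with $\overline f_x(t,0)=0$. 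Here the constant $\beta$ term is absorbed by shifting $g$ (or is handled directly, since Lemma \ref{lemma-of-large-rectangle} works with any bounded perturbation). Theorem \ref{theorem-local-solution} and Proposition \ref{proposition-for-uniqueness} give a unique local solution in $C([0,t_0];\mathcal B\times\mathcal B)$. The existence time $t_0$ depends only on $F$ and on the $L^\infty$ norms of the data, and uniqueness is global in time.

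The key step is the a priori bound. By Lemma \ref{lemma-of-large-rectangle}, for every $\tilde M$ there is a contracting rectangle $R_{L,S}$ for $F_\phi$. Because the cubic term dominates, the rectangle can be taken arbitrarily large, so I choose it to contain the range of $(f_0,g_0)$. I then run the invariance argument. Suppose $t^*$ is the first time at which $(\overline f,g)(t,x)$ touches $\partial R_{L,S}$, say $\overline f(t^*,x^*)=L$. This is legitimate because $C_0$-type decay at infinity, or a limiting argument with slightly enlarged rectangles, ensures the supremum is attained or approximated. At that point $\overline f_{xx}\le0$, and if $x^*=0$ then the Neumann condition $\overline f_x=0$ with reflection gives the same conclusion. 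Moreover $\overline f_t\ge0$. The contracting condition says $F_\phi\cdot n<0$, which yields a contradiction. Corners and the $g$-faces are treated the same way, using $d\ge0$ (if $d=0$ the $g$ equation is an ODE and only the sign of the field matters). The strict inequality in \eqref{normal} is what I would use to avoid degenerate tangency. Hence $\|(f,g)(t)\|_\infty\le \max(L,S)+m\|h\|_\infty$ for all $t$ in the existence interval.

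With this uniform $L^\infty$ bound, Theorem \ref{theorem-local-solution} supplies an existence step $t_0$ independent of the starting time. Iterating gives a solution on $[0,\infty)$ in $C([0,\infty);BC^0)$. Membership in $\mathcal B$ follows exactly as in the regularity part of the proof of Theorem \ref{theorem-local-solution}. I use \ref{A.4} with the $L^\infty$ bound $M$ to get $\|F(V)\|_{\mathcal B}\le k_2\|V\|_{\mathcal B}$, and a singular Gronwall argument then gives $\|V(t)\|_{\mathcal B}\le Ce^{ct}$. This estimate is also the source of the bound $\theta$ that grows at most exponentially in $T$. Indeed the $L^\infty$ bound itself is even uniform in $T$, and any exponential loss comes only from Gronwall, in line with Remark \ref{remark-for-global-linearequation}.

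The main obstacle is making the maximum-principle argument rigorous on the unbounded half-line with Neumann data. The supremum may not be attained, and the solution is only mild. I would first regularize: apply the argument to the rectangle $R_{L+\eta,S+\eta}$, which is still contracting for small $\eta$ by strictness and continuity of $F_\phi$. I would also approximate the data by smooth ones and use the continuous dependence of Proposition \ref{proposition-for-uniqueness}. Alternatively, I would use the Chueh--Conley--Smoller style argument from \cite{RAUCH197812} via the integral equation \eqref{integral-solution}, checking via $\varepsilon$-perturbation that the solution never exits the rectangle.
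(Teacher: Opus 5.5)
Your argument has the same skeleton as the paper's: subtract a lift to get homogeneous Neumann data, obtain a contracting rectangle from Lemma~\ref{lemma-of-large-rectangle}, run a first-touching-time maximum principle argument, and continue the local solution. What differs is the comparison function.

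The paper subtracts the solution $\Phi=(\phi,\alpha)$ of the full linear system $\phi_t=\phi_{xx}-\alpha$, $\alpha_t=d\alpha_{xx}+\varepsilon(\phi-\gamma\alpha+\beta)$, with the same data as $(f,g)$. The difference then has zero initial data and its vector field is exactly $F_\phi$. So the rectangle only has to contain $0$, the lemma applies verbatim, and the exponential growth of $\theta$ is inherited from Remark~\ref{remark-for-global-linearequation}.

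You subtract only the scalar heat lift from $f$. You must therefore fit the range of $(f_0,g_0)$ inside the rectangle, and the lemma does not apply verbatim. The $g$-equation is really $g_t=dg_{xx}+\varepsilon(\overline f+\phi-\gamma g+\beta)$, so you dropped the term $\varepsilon\phi$. Neither that term nor $\beta$ fits the field of Lemma~\ref{lemma-of-large-rectangle}; shifting $g$ by $\beta/\gamma$ only moves $-\beta/\gamma$ into the first component. The repair is to redo the face computations. The top and bottom faces now need $\gamma S>L+\tilde M+\beta$, the lateral faces are unchanged, and both hold for large $L$ with $S\approx(L+\tilde M+\beta)/\gamma$.

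The one genuinely false claim is that $\|H_1\|_\infty\le m\|h\|_\infty$ uniformly in $t$. The estimate for $R$ stated after \eqref{integral-solution} is only asserted on $[0,t_0)$. The Neumann heat lift in fact grows like $\sqrt t$: for $h\equiv 1$ one gets $H_1(t,0)=-2\sqrt{t/\pi}$. You therefore need $\tilde M=\tilde M(T)\lesssim\|h\|_\infty\sqrt T$ and a $T$-dependent rectangle, exactly like the paper's $R(T)$. Your statement that the $L^\infty$ bound is uniform in $T$ is wrong.

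This does not damage the theorem. $L$ can be taken linear in $\tilde M$, so your route gives $\theta=O(\sqrt T)$, which is better than the paper's exponential bound. The Gronwall loss you mention concerns only the $\mathcal B$-norm, not $\theta$. Your treatment of a touching point at $x^*=0$ via the Neumann condition and even reflection, and your attention to whether the supremum is attained on the half-line, cover points the paper passes over. The paper's remark that $x_0\neq 0$ because $0\in\operatorname{int}R(T)$ conflates the spatial origin with the origin of phase space.
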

\begin{proof}
To achieve global existence, we only need to establish an a priori bound for the solution. It is sufficient to demonstrate that there is an a priori bound on the interval $[0, T]$, where $T > 0$ is arbitrary. To do this, we construct a comparison function $\Phi(t, x)=(\phi(t, x), \alpha(t, x))$, bounded on $[0, T]$, which has the same initial and boundary values as the solution of \eqref{ecuacionfhnneuman}. Let $\Phi(t, x)=(\phi(t, x), \alpha(t, x))$ be the solution of
$$
\begin{aligned}
& \phi_t=\phi_{x x}-\alpha, \\
& \alpha_t=d\alpha_{xx} + \varepsilon(\phi-\gamma \alpha + \beta),
\end{aligned}
$$
with initial and boundary conditions
$$
\begin{gathered}
\phi(0, x)=f_0(x), \quad \alpha(0, x)=g_0(x), \\
\phi_x(t, 0)=h(t) .
\end{gathered}
$$
By Remark \ref{remark-for-global-linearequation} we know that $\Phi$ has at most exponential growth. Thus, there exists $N>0$ such that $\|\Phi(t)\| \leqslant N$ for $0 \leqslant t \leqslant T$. Now, we are going to estimate $\hat{H}=H-\Phi$.
By setting
$$
\begin{aligned}
& \tilde{f}=f-\phi, \\
& \tilde{g}=g-\alpha,
\end{aligned}
$$

we get that
\begin{equation}
\left\{\begin{array}{ll}
\tilde{f}_t=\tilde{f}_{x x}+\tilde{f}+\phi - (\tilde{f}+\phi)^3/3-\tilde{g}, & x \geqslant 0, t \geqslant 0, \\
\tilde{g}_t=d\tilde{g}_{xx}+\varepsilon( \tilde{f}-\gamma \tilde{g}), & x \geqslant 0, t \geqslant 0, \\
\tilde{f}(0, x)=0, \tilde{g}(0, x)=0, & x \geqslant 0, \\
f_x(t, 0)=0, & t \geqslant 0.
\end{array}\right.
\end{equation}

By Lemma \eqref{lemma-of-large-rectangle} we can construct a rectangle, $R(T)$ with $0 \in \operatorname{int} R(T)$, 
which 
is contracting for the vector field $F_\phi(\tilde{H})=\left(\tilde{f}+\phi - \frac{(\tilde{f}+\phi)^3}{3} - \tilde{g}, \varepsilon (\tilde{f} - \gamma \tilde{g})\right)$.\\

We aim to demonstrate that $(\tilde{f}, \tilde{g})$ remains bounded for $0 \leq t \leq T$. Moreover, given that $(\tilde{f}(0, x), \tilde{g}(0, x))$ lies within $R(T)$, we will establish that $(\tilde{f}(t, x), \tilde{g}(t, x))$ stays in $R(T)$ for $0 \leq t \leq T$. To do this, we will use a proof by contradiction to show that $(\tilde{f}, \tilde{g})$ cannot reach the boundary of $R(T)$. Let us assume that $(\tilde{f}, \tilde{g})$ reaches the boundary of $R(T)$. Given that the initial condition is in $C_0$, there must be a first time $t_0$ and a finite $x_0$ such that $\tilde{H}(t_0, x_0) \in \partial R$. Note that $x_0$ cannot be zero, since $0$ is within the interior of $R(T)$. If we are on the right-hand side of $R(T)$, then $\tilde{f}(t_0, x_0)$ must lie on the boundary $\partial R(T)$. Since $t_0$ is the first time, we have
\begin{equation}\label{equation-for-contradicition-middleproof-globalsolution}
    \tilde{f}_t\left(t_0, x_0\right) \geqslant 0 .
\end{equation}

By construction of $R(T)$ and definition of contracting rectangle we have
$$
(\tilde{f}+\phi - \frac{(\tilde{f}+\phi)^3}{3} - \tilde{g})\left.\right|_{(t, x)=\left(t_0, x_0\right)}<0 .
$$

Since $\tilde{f}(t, x) \leqslant \tilde{f}\left(t_0, x_0\right)$ for all $x \geqslant 0, t \leqslant t_0$, we see that $\tilde{f}\left(t_0, \cdot\right)$ has a local maximum at $x_0$, so $\tilde{f}_{x x}\left(t_0, x_0\right) \leqslant 0$. Thus at $\left(t_0, x_0\right)$
$$
\tilde{f}_t=\tilde{f}_{x x}+\tilde{f}+\phi - \frac{(\tilde{f}+\phi)^3}{3} - \tilde{g}<0,
$$
which contradicts \eqref{equation-for-contradicition-middleproof-globalsolution}. If we are on the left side of $\partial R(T)$ all the inequalities are reversed. For the top side, if $t_0$ is the first time such that for some $x_0, \tilde{g}\left(t_0, x_0\right) \in \partial R(T)$, we again have 
$$
    \tilde{g}_t\left(t_0, x_0\right) \geqslant 0.
$$

But again by the construction of $R$ we know that at $\left(t_0, x_0\right)$
$$
\tilde{g}_t = \varepsilon (\tilde{f} + \gamma \tilde{g}) < 0,
$$
getting the same contradiction as we got on the right hand side. For the bottom face the inequalities are reversed. Hence, the solution $\widetilde{H}=(\tilde{f}, \tilde{g})$ remains in $R(T)$ for $0 \leqslant t \leqslant T$. Now, we have that there exist $c>0$ such that
$$
\|H(t)-\Phi(t)\|_{\infty} \leqslant c.
$$
Since the growth of $\Phi$ is at most exponential we get for a constant $k$ that
$$
\|H(t)\|_{\infty} \leqslant c \exp (k T), \quad 0 \leqslant t \leqslant T .
$$
\end{proof}

\section{Averaged System}
In this section, we apply averaging techniques to derive the Averaged System \eqref{partially-avg-system}. This approach has been successfully used in cases where trigonometric internal sources affect the system \cite{cerpa2023approximation,MR2881590}. However, unlike previous works that dealt with explicit expressions, which allowed the decomposition of the solution into fast and slow components, we face additional complexities here. We implement the averaging technique to demonstrate that it still works effectively even when the expression is implicit, specifically when it arises as the solution to a partial differential equation. This new application not only confirms the  robustness of the method but also extends its applicability to more challenging scenarios. Finally, we establish the existence of small contracting rectangles, which will help us assess the desired stability.
\subsection{Averaged System}
This section is devoted for the derivation of the Averaged System. To do so, we look for solutions $(v,u)$ of \eqref{centeredfhn} of the form 
\begin{equation*}
    v = \tilde{V} + w, \quad u = \tilde{U}
\end{equation*}
where $(\tilde{V},\tilde{U})$ is the slow varying part of $(v,u)$ and $w$ is the solution of the following system
\begin{equation}
    \left\{\begin{array}{ll}
         w_t - w_{xx} = \dot{h}(t)\psi(x) + \left(1-(v_0-h(t)\psi(x))^2 \right)w, & x\geq 0, t\geq 0,\\ w(0,x) = 0, & x\geq 0,\\
         w_x(t,0) = 0, & t\geq 0.
    \end{array}\right.
\end{equation}
We substitute into \eqref{centeredfhn} to get the following equations for $(\tilde{V},\tilde{U})$
\begin{equation}\label{before-averaging-system}
    \begin{aligned}
        \tilde{V}_t + w_t - \tilde{V}_{xx} - w_{xx} &= \dot{h}\psi - h\psi^{\prime \prime} + \tilde{V} + w - h\psi - \frac{(\tilde{V} + w)^3}{3} - (\tilde{V} + w)^2v_0 - (\tilde{V} + w)v_0^2  + (\tilde{V} + w)^2h\psi \\
        &+2(\tilde{V} +w)v_0h\psi + v_0^2h\psi - (\tilde{V} + w)h^2\psi^2 - v_0h^2\psi^2 + \frac{h^3\psi^3}{3} - \tilde{U}\\
        \tilde{U}_t &= \varepsilon(\tilde{V} + w -\gamma \tilde{U}) + \varepsilon h\psi
        \end{aligned}
\end{equation}
We assume that $(\tilde{V},\tilde{U})$ is slowly varying in comparison with the high-frequency term $w$. Thus, it makes sense to consider an averaging process over the period $(t-\frac{\pi}{\omega},t+\frac{\pi}{\omega})$. Setting 
\begin{equation}\label{w-average}
w_{avg}(t,x) = \dfrac{\omega}{2\pi} \Int_{t-\frac{\pi}{\omega}}^{t+\frac{\pi}{\omega}} w(x,\tau) d \tau,
\end{equation}
we get the following averaged system
\begin{equation}\label{system-avg-with-trash}
\left\{\begin{aligned}
\tilde{V}_t - \tilde{V}_{xx} =& \tilde{V} - \tilde{V}^3/3-\tilde{V}^2v_0 - \tilde{V}v_0^2 +2\tilde{V}(w)_{avg}(h\psi - v_0) -\tilde{V}^2(w)_{avg}  \\
&\qquad - (\tilde{V} + v_0)\left(\frac{A^2\psi^2}{2} + (w^2)_{avg}\right) - \dfrac{(w^3)_{avg}}{3} + (w^2h)_{avg}\psi - \tilde{U},  & & x\geq 0, t\geq 0, \\
\tilde{U}_t - d \tilde{U}_{xx} =& \varepsilon(\tilde{V} + (w)_{avg} - \gamma \tilde{U}) & & x\geq 0, t\geq 0, \\
\tilde{V}(0,x) = \overline{f}_0(x)&, \, \tilde{U}(0,x) = \overline{g}_0(x), & & x \geq 0,
\end{aligned}\right.
\end{equation}
and the system for $w_{avg}$,
\begin{equation}
    \left\{\begin{array}{ll}
         (w_{avg})_t - (w_{avg})_{xx} =  \left(1-v_0^2 + \dfrac{A^2\psi^2}{2} \right)w_{avg} + O\left(\frac{A}{\omega}\right), & x\geq 0, t\geq 0,\\ w_{avg}(0,x) = 0, & x\geq 0,\\
         (w_{avg})_x(t,0) = 0, & t\geq 0.
    \end{array}\right.
\end{equation}
For the high frequency term $w$ and $w_{avg}$ we have the following lemma that gives us important estimates for the following sections.
\begin{lemma}\label{lemma-bound-high-frquencyterm}
    (Estimate for the high frequency term.) Let $\mathcal{B} =W^{k, p}(\mathbb{R^+})$ with $k \geq 3, p \geq 1$ and let $(v_0,u_0)$ be the unique solution of \eqref{centerequation}. Then, for any $T>0 $ there exists $w \in C([0,T],\mathcal{B} )$ solution of \eqref{highfrequencyequation} and a constant $C>0$ such that 
    \begin{equation}\label{bound-for-high-frequency-term}
        \abs{w(t,x)} \leq C_1 A, \quad \text{and} \quad \abs{w_{avg}(t,x)} \leq  \dfrac{C_2A}{\omega}\quad \text{for all} \quad x\in [0,\infty), 0\leq t \leq T,
    \end{equation}
    where $C_1 = C(v_0,\|\psi\|_{\infty},\|\psi^{\prime}\|_{\infty})$ and $C_2 = C(v_0,\|\psi\|_{\infty})$.
\end{lemma}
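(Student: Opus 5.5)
Existence of $w$ in $C([0,T],\mathcal{B})$ is the easy part. System \eqref{highfrequencyequation} is linear in $w$, with forcing $\dot h(t)\psi(x)=A\omega\cos(\omega t)\psi(x)$ and a bounded, smooth potential $c(t,x):=1-(v_0-h(t)\psi(x))^2$. Existence and uniqueness on $[0,T]$ then follow from the Duhamel/fixed point argument of Theorem \ref{theorem-local-solution} combined with the linear global bound of Remark \ref{remark-for-global-linearequation}. Since the coefficient is bounded, Gronwall gives an a priori bound with at most exponential growth, and the admissibility of $W^{k,p}$ handles the $\mathcal{B}$ regularity because $\psi\in C^2_0$.

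The difficulty is that the naive bound on the forcing is of size $A\omega$, not $A$. To remove the factor $\omega$ I would integrate the forcing by parts in time. Set $z=w-h(t)\psi(x)$, so that $\dot h\psi$ cancels and
$$z_t-z_{xx}=h\psi''+c(t,x)(z+h\psi),\qquad z(0,x)=0,\qquad z_x(t,0)=-h(t)\psi'(0)=h(t).$$
Every source term is now $O(A)$: the interior forcing is bounded by $A(\|\psi''\|_\infty+\|c\|_\infty\|\psi\|_\infty)$ and the Neumann datum by $A$. Using the representation \eqref{integral-solution} with $R=H+S$ (here $S=0$) and the estimate on $R$, followed by Gronwall with the bounded potential $c$, I get $|z|\le C A$ on $[0,T]$, hence $|w|\le |z|+A\|\psi\|_\infty\le C_1A$. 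If one insists that $C_1$ depend only on $\|\psi\|_\infty$ and $\|\psi'\|_\infty$, one can instead keep $\psi''$ inside the heat semigroup and integrate by parts in $x$ against the Neumann kernel. This moves one derivative onto $G$, whose $x$-derivative is integrable in time with bound $\int_0^t (t-s)^{-1/2}ds$.

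For $w_{avg}$, I average the equation for $z$ over $(t-\pi/\omega,t+\pi/\omega)$, extending $w$ by $0$ for negative times, or restricting to $t\ge\pi/\omega$ and treating the initial layer separately. Since $h_{avg}=0$, the term $(h\psi)_{avg}$ vanishes and $w_{avg}=z_{avg}$. The product $(c\,z)_{avg}$ splits as $c_{avg}z_{avg}$ plus a commutator $(c z)_{avg}-c_{avg}z_{avg}$. To make this commutator $O(A/\omega)$, I write $z=z(t)+O(|t-\tau|\,\|z_t\|)$. Then I need $\|z_t\|_\infty\le CA$, which should follow by differentiating the $z$-equation in time and noting that the worst term, $\dot h\psi''$, can again be integrated by parts in time. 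Alternatively, the factor $h$ in $c$ contributes averages of $h\cdot z$ and $h^2 z$; since $h$ is odd around each period center, these contribute $O(A\cdot\omega^{-1}\|z_t\|)$ plus $\frac{A^2\psi^2}{2}z_{avg}$, consistent with the averaged equation displayed before the lemma. This gives a linear equation for $z_{avg}$ with zero data (the boundary average $h_{avg}=0$) and source $O(A/\omega)$. Duhamel plus Gronwall on $[0,T]$ then yields $|w_{avg}|\le C_2A/\omega$.

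The main obstacle is this second estimate: one has to control $z_t$ (equivalently $w_t-\dot h\psi$) uniformly by $A$ rather than $A\omega$, and handle the initial layer $t<\pi/\omega$ where the averaging window leaves $[0,T]$. For the initial layer I would use the crude bound $|w|\le C_1A$ together with the fact that the window contributes only a fraction of length $O(1/\omega)$ where $w$ can be nonzero. For $z_t$ I would first try the time-integration-by-parts trick once more, differentiating the $z$-equation and substituting $\dot h=\frac{d}{dt}(\cdot)$ of an $O(A/\omega)$ antiderivative. The constants may depend on $T$ through Gronwall, which is acceptable since the lemma is stated on $[0,T]$.
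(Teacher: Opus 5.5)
\textbf{The gap is in the $w_{avg}$ estimate.} When you average the $z$-equation you handle $(c\,z)_{avg}$, but you drop the other half of $c\,(z+h\psi)$, even though you kept it as a source in the first step. We have $c\,h\psi=(1-v_0^2)h\psi+2v_0h^2\psi^2-h^3\psi^3$. Since $h_{avg}=(h^3)_{avg}=0$ while $(h^2)_{avg}=A^2/2$, one gets $(c\,h\psi)_{avg}=v_0A^2\psi^2$. The averaged problem is therefore
\[
(z_{avg})_t-(z_{avg})_{xx}=\Bigl(1-v_0^2-\tfrac{A^2\psi^2}{2}\Bigr)z_{avg}+v_0A^2\psi^2+(\text{commutators}),\qquad (z_{avg})_x(t,0)=0.
\]
Note also the minus sign in front of $\tfrac{A^2\psi^2}{2}$. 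The source contains a non-oscillatory term of size $A^2$ that does not depend on $\omega$, so Duhamel gives at best $|w_{avg}|\le C(A^2+A/\omega)$.

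This is not a loss in your estimates. Fix $A$ and $t>0$. First-order averaging shows that, as $\omega\to\infty$, $w_{avg}=z_{avg}$ converges to the solution $\bar z$ of the problem above without commutators and with zero initial data. Because $v_0<0$ and $\psi\not\equiv0$, $\bar z<0$ for $t>0$. Hence the bound $|w_{avg}|\le C_2A/\omega$, with $C_2$ independent of $\omega$, is false, and no argument can deliver it. Your remark that $h$ is ``odd around each period center'' is where the term slipped through: $h$ has zero mean over a period, but $h^2$ does not.

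The paper's proof has the same hole. It iterates on the averaged equation for $w_{avg}$ displayed before the lemma, whose remainder is asserted to be $O(A/\omega)$. That equation omits exactly $v_0A^2\psi^2$ and has the opposite sign on $\tfrac{A^2\psi^2}{2}$. The defensible statement is $|w_{avg}|\le C(A^2+A/\omega)$, and that is all the later step ``ignore terms of order $O(A^2)$ and $O(1/\omega)$'' actually uses.

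\textbf{Three smaller points.}

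First, the bound $|w|\le C_1A$. Your substitution $z=w-h\psi$ is a valid alternative to the paper's route, which proves an oscillatory-integral bound by integrating by parts in $\tau$ against $e^{i\omega\tau}$ and then runs a contraction. Your substitution performs that integration by parts once, at the level of the equation, and needs no smallness of $A\|\psi\|_\infty$. However, Gronwall with the full potential $c$ makes your constant grow like $e^{CT}$. The lemma's $C_1$ is $T$-independent, and the proof of Theorem~\ref{theorem-approximation} relies on this when it lets $T$ be arbitrary. To fix this, keep $e^{(1-v_0^2)(t-\tau)}$ in the kernel and treat $2v_0h\psi-h^2\psi^2$ perturbatively for small $A\|\psi\|_\infty$, as the paper does. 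This also bounds the Neumann contribution by $A/\sqrt{v_0^2-1}$ uniformly in $T$.

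Second, your initial-layer argument fails because the average is normalized by the window length. Extending $w$ by $0$ for $\tau<0$ gives
\[
w_{avg}(0,x)\approx\frac{\omega}{2\pi}\int_0^{\pi/\omega}A\sin(\omega\tau)\psi(x)\,d\tau=\frac{A\psi(x)}{\pi},
\]
which is of order $A$. You would have to extend $z$, not $w$, by zero.

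Third, $\|z_t\|_\infty\le CA$ fails near $x=0$. The oscillating Neumann datum $z_x(t,0)=h(t)$ produces a boundary layer of width $\omega^{-1/2}$. In it, $z$ oscillates with amplitude of order $A\omega^{-1/2}$ and $z_t$ is of order $A\omega^{1/2}$, so the commutator is small only after integrating across the layer.
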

\begin{proof}
    See Appendix \ref{appendix-proof-high-frequency}.
\end{proof}
Due to Lemma \ref{lemma-bound-high-frquencyterm} we ignore terms of order $O(A^2)$ and order $O(\frac{1}{\omega})$ on system \eqref{system-avg-with-trash} and we are able to deduce the averaged system \eqref{partially-avg-system} that looks like
\begin{equation*}
\left\{\begin{aligned}
V_t - V_{xx} =& V - V^3/3-V^2v_0 - Vv_0^2 - U ,  & & x\geq 0, t\geq 0, \\
U_t - d U_{xx} =& \varepsilon(V  - \gamma U) & & x\geq 0, t\geq 0, \\
V(0,x) = \overline{f}_0(x)&, \, U(0,x) = \overline{g}_0(x), & & x \geq 0.
\end{aligned}\right.
\end{equation*}
In the next section we prove that the system \eqref{partially-avg-system} is, in fact, a faithful approximation of the original system \eqref{centeredfhn}.
\subsection{Existence of small contracting rectangles}
In this section, we construct small contracting rectangles for the vector field associated with system \eqref{partially-avg-system}. To do this, we consider the following vector-valued function $\tilde{F}: \R^2\to \R^2,$
\begin{equation}\label{vector-field-for-average-system}
\tilde{F}((V, U)) = \left( \left(1-v_0^2\right) V-\frac{V^3}{3} -V^2v_0 - U, \varepsilon(V-\gamma U) \right).
\end{equation}
Now we present the result that addresses the existence of arbitrary small contracting rectangles.
\begin{lemma}\label{lemma-existence-smallrectangles}
Let $\varepsilon >0$, let $\beta, \gamma$ satisfy \eqref{parameters} and let $(v_0,u_0)$ be the unique solution of \eqref{centerequation}. If $(L,S)$ belongs to the following set
\begin{equation}
\left\{(L, S): 0<L<\left|v_0\right|, \quad \dfrac{1}{\gamma}<\frac{S}{L}<v_0^2-1, \quad L<\frac{\left(v_0^2-1-S / L\right)}{\left(-v_0-L / 3\right)}\right\} 
\end{equation}
then, the rectangles $R_{L,S}$ are contracting under the flow $\tilde{F}((U,V))$. Moreover, given any open neighborhood $\oo$ of $(0,0)$ there exist $(L,S)$ such that $R_{L,S} \in \oo$. 
\end{lemma}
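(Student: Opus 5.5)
The plan is to verify the contracting condition \eqref{normal} directly on each of the four sides of $R_{L,S}$ for the autonomous field $\tilde F$ in \eqref{vector-field-for-average-system}, then treat the corners, and finally show that the admissible set of $(L,S)$ is nonempty and contains pairs with $L,S$ arbitrarily small. Throughout I use Lemma \ref{parameters-lemma}, which gives $v_0<0$, $v_0^2>1$, and $\frac{1}{v_0^2-1}<\delta\gamma$ with $\delta<1/16$.

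\emph{Right side} ($V=L$, $|U|\le S$). Since $-U\le S$, it suffices to show
\[
(1-v_0^2)L-\tfrac{L^3}{3}-v_0L^2+S<0 .
\]
Dividing by $L>0$, this reads $L\left(-v_0-\tfrac{L}{3}\right)<v_0^2-1-\tfrac{S}{L}$. Because $0<L<|v_0|$ and $v_0<0$, the factor $-v_0-L/3$ is positive. The inequality is then exactly the third condition defining the set.

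\emph{Left side} ($V=-L$). Here the outward normal is $-e_1$, so I need $\tilde F_1>0$. The first component equals
\[
(v_0^2-1)L+\tfrac{L^3}{3}-v_0L^2-U \;\ge\; (v_0^2-1)L+\tfrac{L^3}{3}+|v_0|L^2-S .
\]
This is positive because $S/L<v_0^2-1$ and the remaining terms are nonnegative, using $v_0<0$.

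\emph{Top and bottom} ($U=\pm S$, $|V|\le L$). On the top, $\tilde F_2=\varepsilon(V-\gamma S)\le\varepsilon(L-\gamma S)<0$ since $S/L>1/\gamma$; the bottom side is symmetric.

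\emph{Corners.} Any vector in the closed outward normal cone at a corner is a nonnegative combination, not both coefficients zero, of the two adjacent side normals. The corresponding strict inequalities were already proved at the corner point itself, since the side estimates hold for the closed ranges $|U|\le S$ and $|V|\le L$. Hence \eqref{normal} holds there too. As $\tilde F$ does not depend on $x$ or $t$, the supremum in \eqref{normal} is trivial.

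\emph{Nonemptiness and smallness.} This is the only step needing care. First, $\frac{1}{\gamma}<v_0^2-1$ follows from $\frac{1}{v_0^2-1}<\delta\gamma<\gamma$. So I can fix a ratio $r$ with $1/\gamma<r<v_0^2-1$ and set $S=rL$. As $L\to0^+$, the right-hand side of the third condition tends to $\frac{v_0^2-1-r}{-v_0}>0$, while the left-hand side is $L\to0$. Hence for all sufficiently small $L$ (also $L<|v_0|$) the pair $(L,rL)$ lies in the set. Given an open neighborhood $\oo$ of $(0,0)$, shrinking $L$ further gives $R_{L,rL}\subset\oo$.
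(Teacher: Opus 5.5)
Your proposal is correct and follows essentially the same route as the paper: a face-by-face check that $n\cdot\tilde F<0$ (top and bottom give $S/L>1/\gamma$, the left side gives $S/L<v_0^2-1$, the right side gives exactly the third condition), followed by fixing $S=rL$ with $1/\gamma<r<v_0^2-1$ and letting $L\to0^+$. Your explicit treatment of the corner normal cones and your check that the whole rectangle, not just its vertices, lies in $\oo$ are small additions beyond what the paper writes out.
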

\begin{proof}
 We want to show that the set of pairs $(L,S) \in \R^2_+$ such that the rectangle $R_{L,S}$ is contracting is non-empty. In order to do this, we have to prove
that the vector field $\tilde{F}$ is pointing inwards at each point of the boundary $\partial R_{L,S}$ . Since we know the
normal vectors at each face we can do explicit calculations as follows.
 \begin{enumerate}[align=left, labelwidth=1ex]
 \item[1. Top face.] At $U=S, V \in[-L, L]$ we have
$$
\begin{aligned}
(0,1) \cdot \tilde{F}((V, U)) & =\varepsilon(V  -\gamma S) \\
& \leq \varepsilon(L-  \gamma S) .
\end{aligned}
$$

The vector field will point inwards if $L-\gamma S<0$, or equivalently $\frac{1}{\gamma}<\frac{S}{L}$.
 \item[2. Bottom face.] At $U=-S, V \in[-L, L]$ we have
$$
\begin{aligned}
(0,-1) \cdot \tilde{F}((V, U)) & =-\varepsilon(V+\gamma S) \\
& \leq \varepsilon(L-\gamma S ) .
\end{aligned}
$$
We get the same condition as for the top face.
\item[3. Left face.] Assume $0<L<-v_0$. For $V=-L, U \in[-S, S]$ we get the following
$$
\begin{aligned}
(-1,0) \cdot \tilde{F}((V, U))= & -\left(\left(1-v_0^2\right)(-L)-v_0 L^2-\dfrac{(-L^3)}{3}-U\right)\\
\leq & -\left(v_0^2-1-\dfrac{S}{L}\right) L-\left(-v_0+L / 3\right) L^2\\
\leq & -\left(v_0^2-1-\frac{S}{L}\right) L .
\end{aligned}
$$
We want $L$ and $S$ such that the right-hand side becomes negative. This will occur if the following equation is satisfied.
\begin{equation}\label{proof-smallrectangles-condition-leftface}
    v_0^2-1-\frac{S}{L}> 0.
\end{equation}

Putting together conditions on top/bottom faces and \eqref{proof-smallrectangles-condition-leftface} we see that the sides must satisfy
\begin{equation}\label{proof-smallrectangles-condition-leftface-and-topbottomface}
0<L<-v_0 \text { and } \frac{1}{\gamma}<\frac{S}{L}<v_0^2-1
\end{equation}
\item[4. Right face.] Assume $0<L<-v_0$. For $V=L, U \in[-S, S]$ we get the following
$$
\begin{aligned}
(1,0) \cdot \tilde{F}((V, U)) & =\left(1-v_0^2\right) L-v_0 L^2-L^3 / 3-U\\
& \leq\left(-v_0-L / 3\right) L^2-\left(v_0^2-1-\frac{S}{L}\right) L.
\end{aligned}
$$

Since $\left(-v_0-L / 3\right) L^2>0$ the right-hand side in the equation above will be negative if
\begin{equation}\label{proof-smallrectangles-condition-rightface}
    L<\frac{\left(v_0^2-1-\frac{S}{L}\right)}{\left(-v_0-L / 3\right)} .
\end{equation}
\end{enumerate}
Combining the conditions from top/bottom sides, \eqref{proof-smallrectangles-condition-leftface}, \eqref{proof-smallrectangles-condition-leftface-and-topbottomface} and \eqref{proof-smallrectangles-condition-rightface} we obtain the following set of sides $(L, S)$ for which $R_{L, S}$ is a contracting rectangle under $\tilde{F}((V, U))$,
$$
\left\{(L, S): 0<L<\left|v_0\right|, \quad \frac{1}{\gamma}<\frac{S}{L}<v_0^2-1, \quad L<\frac{\left(v_0^2-1-S / L\right)}{\left(-v_0-L / 3\right)}\right\} .
$$
For the last part, we see that Lemma \ref{parameters-lemma} gives us $v_0^2-1-\frac{1}{\delta \gamma}>0$ for $\delta>0$. But since we are restricting to $\delta < 1/16$ (see Lemma \ref{lemma-paramters-not-restrictive}) we have $v_0^2-1-\frac{1}{\gamma}>0$. Then, let us consider $\mathcal{O}$ an arbitrary neighborhood of $(0,0)$ and take $\epsilon>0$ small so that $\frac{1+\epsilon}{\gamma}<v_0^2-1$ and take $S_0=\frac{1+\epsilon}{\gamma} L_0$. Next, take $0<L_0<\left|v_0\right|$ small enough so that $\left( \pm L_0, \pm S_0\right) \in \mathcal{O}$ and
$$
0<L_0<\frac{\left(v_0^2-1-\frac{1+\epsilon}{\gamma}\right)}{\left(-v_0-L_0 / 3\right)},
$$
which implies that the pair $(L, S)=\left(L_0, \frac{1+\epsilon}{\gamma} L_0\right)$ satisfies $R_{L, S} \subset \mathcal{O}$.
\end{proof}
\subsection{Proof Theorem \ref{stability-avg-system}}
We show that if $U_0 = (f(0, x)-v_0, g(0, x)-u_0)$ lies within a contracting rectangle $R$ for every $x>0$, then $\tilde{U} = (V(t, x), U(t, x))$ stays in $R$ for every $x>0$ and $t>0$. This is done defining the norm in the space 
$$X=\left\{\vec{v} \in C\left(\mathbb{R}^+ ; \mathbb{R}^2\right): \vec{v}\right. \,\, \text{is a continuous functions that converge to} \, (0,0) \, \text{when} \, \left.x \rightarrow \infty\right\},
$$
given by
$$
\|\vec{v}\|_X=\sup _{x \in \mathbb{R}} \inf \left\{r>0: \vec{v}(x) \in r R_{L, S}\right\}.
$$
By Lemma \ref{lemma-existence-smallrectangles} there exists small contracting rectangles R for the vector field \eqref{vector-field-for-average-system} contained in an arbitrary open neighborhood of $(0,0)$ $\mathcal{O}$. So, since we are assuming that $U_0$ lies inside R, for every $x>0$, we have $\|U_0\|_X < 1$ and by Theorem \ref{theorem-local-solution} there exists $U \in C\left(\left[0, t_0\right];\mathcal{B}\times \mathcal{B}\right) $. We claim that $\|U(t)\|_X < 1$ for $0<t<t_0$. If this is not true, then there exists a time $\overline{t} = \inf\{t\in (0,t_0)| \|U(t)\|_X = 1 \}$. By \cite[Lemma 3.4]{SCHONBEK1978119}, we have that
$$
\overline{D} \|U(\overline{t})\|_X < 0,
$$
where $\overline{D}$ represents the right Dini derivative. This implies that for any $t \in (\overline{t} - \epsilon, \overline{t}]$ we have $\|U(t)\|_X >1$, but since $\|U_0\|_X < 1$ this mean that there exists $t^{\star} < \overline{t}$ such that $\|U(t^{\star})\|_X =1$  which is a contradiction on the definition of $\overline{t}$. Thus, the estimate $\|U(t)\|_X<1$ holds for $t \in[0, t_0 ]$. Now, since the estimate is the sup norm estimate we can extend $U$ from a local solution to a global solution with $\|U(t)\|_X<1$ for all $t \geqslant 0$. Which is equivalent to $\tilde{U} = (V(t, x), U(t, x))$ stays in $R$ for every $x>0$ and $t>0$. This concludes the proof of Theorem \ref{stability-avg-system}
\section{Approximation result}
We now prove the approximation result.
\begin{proposition}\label{equation-approximation-error}
(Equation for the approximation error.) Let $\mathcal{B} =W^{k, p}(\mathbb{R^+})$ with $k \geq 3, p \geq 1$ and let $(v,u)$ the solution of the system \eqref{centeredfhn} and $(V,U)$ the solution of the Averaged System \eqref{partially-avg-system} with the same initial data. Then, the approximation error given by $E_v = v - V - w$ and $E_u = u - U$ satisfy
\begin{equation}\label{nonlinear-error-equation}
\left\{\begin{aligned}
\partial_t E_v-\partial_x^2 E_v & =\left(1-\left(v_0+V\right)^2+\varphi_1\right) E_v+\varphi_2 E_v^2-\frac{1}{3} E_v^3-E_u+\varphi_3, \\
\partial_t E_u-d\partial_x^2 E_u & =\varepsilon\left(E_v-\gamma E_u\right)+\varepsilon (w+h\psi), \\
E_v(0)=0, & E_u(0)=0,
\end{aligned}\right.
\end{equation}
where $w$ is the solution of \eqref{highfrequencyequation} and 
\begin{align}
    \varphi_1 &= -2(v_0+V)(w-h\psi) - (w-h\psi)^2,\\
    \varphi_2 &= -V - w - v_0 + h\psi,\\
    \varphi_3 &= -h\psi^{\prime \prime} - h\psi - Vw^2 - \frac{w^3}{3}-V^2w- w^2v_0 -2Vwv_0\\ &+V^2h\psi+ w^2h\psi+2Vwh\psi +2Vv_0h\psi +v_0^2h\psi\\
    &- Vh^2\psi^2- v_0h^2\psi^2 + \frac{h^3\psi^3}{3}.
\end{align}
\end{proposition}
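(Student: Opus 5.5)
The proof is a direct algebraic computation. I substitute the decomposition $v = V + w + E_v$, $u = U + E_u$ into \eqref{centeredfhn}, subtract the equations satisfied by $(V,U)$ in \eqref{partially-avg-system} and by $w$ in \eqref{highfrequencyequation}, and sort what remains by powers of $E_v$.

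First I would justify the manipulations. Since $\mathcal{B}=W^{k,p}(\mathbb{R}^+)$ with $k\ge 3$, the solutions $(v,u)$ (Theorem \ref{theorem-global-solution}), $(V,U)$ (Theorem \ref{stability-avg-system}) and $w$ (Lemma \ref{lemma-bound-high-frquencyterm}) are regular enough to satisfy their equations pointwise. So I may subtract them term by term. The initial data are immediate: $v(0)=V(0)=\overline{f}_0$, $u(0)=U(0)=\overline{g}_0$ and $w(0)=0$, hence $E_v(0)=E_u(0)=0$. Likewise $v$ and $w$ both satisfy homogeneous Neumann conditions. For $V$ the averaged system \eqref{partially-avg-system} does not list a boundary condition, so I take $V_x(t,0)=0$ (inherited from \eqref{centeredfhn}); then $\partial_xE_v(t,0)=0$.

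The $u$-equation is immediate. Subtracting $\varepsilon(V-\gamma U)$ from $\varepsilon(v-\gamma u)+\varepsilon h\psi$ gives $\varepsilon(E_v+w-\gamma E_u)+\varepsilon h\psi$, which is the claimed right-hand side.

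For the $v$-equation I avoid the expanded form in \eqref{centeredfhn} and go back to its compact origin. Using $v_0-v_0^3/3-u_0=0$, the right-hand side of the $v$-equation equals
$$\dot h\psi-h\psi''+(v-h\psi)-\tfrac13\big[(v_0+v-h\psi)^3-v_0^3\big]-u .$$
Set $Z=v_0+V+w-h\psi$, so that $v_0+v-h\psi=Z+E_v$. Expanding $(Z+E_v)^3=Z^3+3Z^2E_v+3ZE_v^2+E_v^3$ gives the following terms.
\begin{itemize}
\item The linear coefficient is $1-Z^2=1-(v_0+V)^2-2(v_0+V)(w-h\psi)-(w-h\psi)^2$, which is exactly $1-(v_0+V)^2+\varphi_1$.
\item The quadratic coefficient is $-Z=\varphi_2$.
\item The cubic term is $-\tfrac13E_v^3$.
\item The term $-u=-U-E_u$ produces $-E_u$.
\end{itemize}
Everything independent of $E_v,E_u$ is
$$\dot h\psi-h\psi''+(V+w-h\psi)-\tfrac13(Z^3-v_0^3)-U .$$
From this I subtract $V-V^3/3-V^2v_0-Vv_0^2-U$ (the $V$-equation) and $\dot h\psi+(1-(v_0-h\psi)^2)w$ (the $w$-equation). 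The $\dot h\psi$ terms cancel against the forcing of $w$, and the pure-$V$ terms of $Z^3$ cancel against the $V$-equation. Expanding the trinomial $(v_0+V+(w-h\psi))^3$ then leaves a polynomial in $V,w,h\psi,v_0$, which should coincide with $\varphi_3$.

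The main obstacle is this last bookkeeping step. I would organize it by first writing $(v_0+V+w-h\psi)^3-(v_0+V)^3$, and then removing the $w$-linear part $3(v_0-h\psi)^2w$ absorbed by the $w$-equation. I expect the leftover mixed terms such as $w\,v_0h\psi$ and $wh^2\psi^2$ to cancel or regroup, and I would check each against the listed form of $\varphi_3$. If they do not cancel, the remainder is still a bounded polynomial of the same orders in $A$ and $w$, so the structure of \eqref{nonlinear-error-equation} is unaffected.
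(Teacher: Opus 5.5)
Your proposal is correct and follows the paper's route. The paper's proof is simply the subtraction of \eqref{partially-avg-system} and \eqref{highfrequencyequation} from \eqref{centeredfhn}, and your rewriting of the cubic as $-\tfrac13[(Z+E_v)^3-v_0^3]$ with $Z=v_0+V+w-h\psi$ is a clean way to organize that computation.

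The bookkeeping you left open does close exactly. With $Q=w-h\psi$, the $V$-free, $w$-linear part $-(v_0-h\psi)^2w$ of $-(v_0+V)^2Q-(v_0+V)Q^2-\tfrac13Q^3$ cancels against the $w$-equation, and the remaining terms, together with $-h\psi''-h\psi$, reproduce $\varphi_3$ term by term. Your fallback hedge is therefore unnecessary, and it would not by itself have established the exact formula the proposition claims.
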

\begin{proof}
    It is immediate from taking the difference between \eqref{partially-avg-system} and \eqref{centeredfhn}.
\end{proof}
\subsection{Linear estimate of the error}
We prove an approximation result for the linear problem
\begin{equation}\label{linear-error-equation}
\left\{\begin{aligned}
\partial_t F_v-\partial_x^2 F_v & =\left(1-\left(v_0+V\right)^2+\varphi_1\right) F_v-F_u+\varphi_3, \\
\partial_t F_u-d \partial_x^2 F_u & =\varepsilon F_v-\varepsilon \gamma F_u+\varepsilon (w+h\psi), \\
F_v(x, 0)=0, & F_u(x, 0)=0,
\end{aligned}\right.
\end{equation}
\begin{lemma}\label{lemma-linear-error}
(Linear estimate of the error). Let $\mathcal{B} =W^{k, p}(\mathbb{R^+})$ with $k \geq 3, p \geq 1$ and let $\beta, \gamma$ satisfy \eqref{parameters}, let $\varepsilon>0$, and let $\left(v_0, u_0\right)$ be given by \eqref{centerequation}. Suppose that the parameters $A, \gamma$, the lift off function $\psi$ and the solution $(V, W)$ of Averaged System \eqref{partially-avg-system} satisfy, for some $T>0$, the estimate
\begin{equation}\label{condition-for-contraction}
\alpha(T):=\frac{\left\|v_0^2-\left(v_0+V\right)^2\right\|_Y}{\sqrt{v_0^2-1}}+\frac{M^2}{\sqrt{v_0^2-1}}+\frac{2 M\left\|v_0+V\right\|_Y}{\sqrt{v_0^2-1}}+\frac{1}{\gamma\sqrt{v_0^2-1}}<1, \quad t \in[0, T],
\end{equation}
where $M\geq \left\| w- h\psi\right\|_Y$. Then, there exist $\left(F_v, F_u\right) \in C([0, T], \mathcal{B} \times \mathcal{B})$ solution of the system \eqref{linear-error-equation} with initial data $\left(F_v(0), F_u(0)\right)=(0,0)$, and a constant $C=C(v_0,\gamma, \norm{\psi}_{\infty},\norm{\psi^{\prime}}_{\infty})$ such that
\begin{equation}\label{bounds-linear-equation}
\left|F_v(t, x)\right| \leq AC, \quad\left|F_u(t,x)\right| \leq AC,
\end{equation}
for all $x \in [0,\infty), 0 \leq t \leq T$.    
\end{lemma}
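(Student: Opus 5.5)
We will prove the lemma by a fixed-point argument in $Y$, in the spirit of the corresponding linear estimate in \cite{cerpa2023approximation}. The idea is to treat the variable part of the coefficient, together with the coupling term, as perturbations of a linear damped problem.

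We first rewrite the first equation of \eqref{linear-error-equation} as
\[
\partial_t F_v-\partial_x^2 F_v+(v_0^2-1)F_v=\bigl(v_0^2-(v_0+V)^2+\varphi_1\bigr)F_v-F_u+\varphi_3 .
\]
By Lemma \ref{parameters-lemma}, $v_0^2-1>0$, so the operator on the left is dissipative. Let $\lambda:=v_0^2-1$. The Neumann heat semigroup on $\R^+$, written with the reflected kernel $G(t,z,x)$ of Section 4, is positive and satisfies $\int_0^\infty G(t,z,x)\,dz=1$. The Duhamel formula therefore gives
\[
\|F_v\|_Y\le \frac1\lambda\bigl\|(v_0^2-(v_0+V)^2+\varphi_1)F_v-F_u+\varphi_3\bigr\|_Y .
\]
The stated $\alpha(T)$ carries the weight $\lambda^{-1/2}$ rather than $\lambda^{-1}$. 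Since $\lambda>1/(\delta\gamma)$ may be small or large, I would either reproduce the paper's normalization or note that for the admissible range one can pass between the two weights. I will not dwell on this point; I only use that the coefficient multiplying $\|F_v\|_Y$ equals $\alpha(T)<1$ up to this normalization.

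The second equation of \eqref{linear-error-equation} is handled the same way, with damping $\varepsilon\gamma$:
\[
\|F_u\|_Y\le \frac1\gamma\|F_v\|_Y+\frac1\gamma\|w+h\psi\|_Y .
\]
Inserting this bound into the $F_v$ estimate and using $|\varphi_1|\le 2M\|v_0+V\|_Y+M^2$ (from $|w-h\psi|\le M$), we obtain
\[
\|F_v\|_Y\le \alpha(T)\|F_v\|_Y+C\bigl(\|\varphi_3\|_Y+\|w+h\psi\|_Y\bigr).
\]
Here the four terms of $\alpha$ come, respectively, from the coefficient $v_0^2-(v_0+V)^2$, from $M^2$, from $2M\|v_0+V\|_Y$, and from the coupling through $F_u$.

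Concretely, I would set up the map $\Gamma$ that sends $(F_v,F_u)$ to the Duhamel solutions with right-hand sides built from the given pair. The estimates above show that $\Gamma$ is a contraction on $C([0,T];Y)$ with constant $\alpha(T)<1$; the contraction constant for the coupled pair is obtained by working with a weighted norm $\|F_v\|_Y+\lambda^{-1}\|F_u\|_Y$ or similar. Existence in $C([0,T],\mathcal B\times\mathcal B)$ then follows from the linear theory of Remark \ref{remark-for-global-linearequation} and Theorem \ref{theorem-local-solution}. The coefficients are bounded and smooth, and $\psi\in C^2_0$ together with $w\in C([0,T],\mathcal B)$ from Lemma \ref{lemma-bound-high-frquencyterm} make the forcing terms lie in $\mathcal B$; hence the fixed point in $Y$ is the $\mathcal B$-valued solution, by uniqueness (Proposition \ref{proposition-for-uniqueness}).

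It remains to bound the forcing by $O(A)$. Every term of $\varphi_3$ contains at least one factor of $h$ or $w$. We have $|h|\le A$, and Lemma \ref{lemma-bound-high-frquencyterm} gives $|w|\le C_1A$; moreover $\|V\|_Y\le L<|v_0|$ by Theorem \ref{stability-avg-system}. Hence
\[
\|\varphi_3\|_Y+\|w+h\psi\|_Y\le A\,C(v_0,\|\psi\|_\infty,\|\psi'\|_\infty,\|\psi''\|_\infty),
\]
for $A\le 1$, say. Dividing by $1-\alpha(T)$ gives $|F_v|\le AC$, and then $|F_u|\le AC$ follows from the $F_u$ bound.

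The main obstacle is making the contraction constant match exactly $\alpha(T)$ as stated, in particular its $1/\sqrt{v_0^2-1}$ weights and the $1/(\gamma\sqrt{v_0^2-1})$ coupling term. If the direct semigroup bound gives $1/\lambda$ instead, I would first try an energy-type or weighted-norm version of the Duhamel estimate, balancing $F_u$ against $F_v$ with a factor $\sqrt\lambda$, to recover the square-root weights.
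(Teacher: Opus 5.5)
Your scheme is the paper's: Duhamel with the factor $e^{-(v_0^2-1)t}$, the terms $(v_0^2-(v_0+V)^2+\varphi_1)F_v-F_u$ treated as a perturbation in $Y$, $F_u$ controlled by $\gamma^{-1}\|F_v\|_Y$, an $O(A)$ forcing, and division by $1-\alpha$. Your kernel estimate $\|G(s)*f\|_{L^\infty_x}\le\|f\|_{L^\infty_x}$ is the correct one. The gap is exactly the step you flag and then set aside. Your estimate gives the contraction constant
\[
\alpha_*(T)=\frac{\|v_0^2-(v_0+V)^2\|_Y+M^2+2M\|v_0+V\|_Y+\gamma^{-1}}{v_0^2-1}=\frac{\alpha(T)}{\sqrt{v_0^2-1}},
\]
and you never show that the hypothesis $\alpha(T)<1$ forces $\alpha_*(T)<1$. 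Saying it ``equals $\alpha(T)$ up to this normalization'' is not an argument. Your remedies would not help either. The paper gets the $1/\sqrt{v_0^2-1}$ weights by bounding $\|G(t-\tau)*f\|_{L^\infty_x}\le \|f\|_{L^\infty_x}/\sqrt{\pi(t-\tau)}$ and integrating against $e^{-(v_0^2-1)(t-\tau)}$ with the error function, and it folds the coupling term in the same way. That kernel bound is false: take $f\equiv 1$ and $t-\tau>1/\pi$, since the Neumann heat flow preserves constants. So reproducing the paper's normalization would import an error, and energy or weighted-norm devices to ``recover'' square-root weights are unnecessary.

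What closes the gap is your other suggestion, passing between the weights on the admissible range. You doubted it because $v_0^2-1>1/(\delta\gamma)$ might be small, but in fact $v_0^2-1\ge 1$ for every admissible pair, so $\alpha_*(T)\le\alpha(T)<1$. To see this, recall $v_0<-1$ from Lemma \ref{parameters-lemma}. If $v_0^2<2$, then $1-v_0^2/3>1/3$, and \eqref{centerequation} together with $\beta>0$ gives
\[
\frac{v_0}{\gamma}<\frac{v_0+\beta}{\gamma}=v_0\Bigl(1-\frac{v_0^2}{3}\Bigr)<\frac{v_0}{3},
\]
hence $\gamma<3$. Lemma \ref{parameters-lemma} then gives $v_0^2-1>1/(\delta\gamma)>16/3$, a contradiction. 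With this inserted, your contraction completes the proof, together with your $O(A)$ bound on $\varphi_3$ and $w+h\psi$. Alternatively you can use the paper's Gauss--Seidel iteration, where $F_u^{(k)}$ is built from $F_v^{(k)}$; it gives $\|F_v^{(k+1)}-F_v^{(k)}\|_Y\le\alpha_*(T)\|F_v^{(k)}-F_v^{(k-1)}\|_Y$ directly, with no weighted norm. As you noticed, $C$ then also depends on $\|\psi''\|_\infty$ (through $h\psi''$ in $\varphi_3$) and on $(1-\alpha(T))^{-1}$.
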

\begin{remark}
    Before we begin with the proof, to simplify the notation let us denote $G_{i,\rho} = G_{\rho}$, where $G_{i,\rho}$ is the Heat Kernel on the half line with diffusion coefficient $\rho$, we will just write $G$ for the case $\rho = 1$. See \ref{A.1} for more details.\end{remark}
\begin{proof}
The idea of the proof is to consider an iterative approximation of \eqref{linear-error-equation}. Multiplying the first equation in \eqref{linear-error-equation} by $e^{-\left(1-v_0^2\right) t}$, we get
$$
\partial_t e^{-\left(1-v_0^2\right) t} F_v-\partial_x^2 e^{-\left(1-v_0^2\right) t} F_v=\left(\left(v_0^2-\left(v_0+V\right)^2\right)+\varphi_1\right) e^{-\left(1-v_0^2\right) t} F_v-e^{-\left(1-v_0^2\right) t} F_u+e^{-\left(1-v_0^2\right) t} \varphi_3.
$$

By virtue of Duhamel's principle, we get
$$
F_v(t,x)=G(t) * F_v(0,\cdot)+\int_0^t e^{\left(1-v_0^2\right)(t-\tau)} G(t-\tau) *\left(\left(\left(v_0^2-\left(v_0+V\right)^2\right)+\varphi_1\right) F_v-F_u+\varphi_3\right) d \tau .
$$
For the second equation, we obtain
$$
F_u(t,x)=G_d(t) * F_u(0,\cdot)+\varepsilon \int_0^t e^{-\varepsilon \gamma(t-\tau)} G_d(t-\tau) *\left(F_v+w+h\psi\right) d \tau .
$$

Let us consider the following iterative procedure. Set $F_v^{(0)}=0, F_u^{(0)}=0$ and define
$$
\begin{aligned}
& F_v^{(k+1)}(t,x)=G(t) * F_v(0,\cdot)+\int_0^t e^{\left(1-v_0^2\right)(t-\tau)} G(t-\tau) *\left(\left(\left(v_0^2-\left(v_0+V\right)^2\right)+\varphi_1\right) F_v^{(k)}-F_u^{(k)}+\varphi_3\right) d \tau \\
& F_u^{(k+1)}(t,x)=G_d(t) * F_u(0,\cdot)+\varepsilon \int_0^t e^{-\varepsilon \gamma(t-\tau)} G_d(t-\tau) *\left(F_v^{(k+1)}+w+h\psi\right) d \tau,
\end{aligned}
$$
where $\varphi_1, \varphi_3$ and $w$ are defined in \eqref{equation-approximation-error}, and where $\left(F_v^{(k)}, F_u^{(k)}\right) \in B C^0(\R^+) \times B C^0(\R^+)$ imply that $\left(F_v^{(k+1)}, F_u^{(k+1)}\right) \in B C^0(\R^+) \times B C^0(\R^+)$. The next step is to look at the convergence of the sequences $\left\{F_v^{(k)}\right\},\left\{F_u^{(k)}\right\}$ in the space $C\left([0, T] ; B C^0(\R^+)\right)$. Let us estimate the difference between two consecutive terms. For $\left\{F_v^{(k)}\right\}$, it holds

\begin{multline*}
\left\|F_v^{(k+1)} - F_v^{(k)}\right\|_Y \\
\leq \sup_{0 < t < T} \int_0^t e^{(1 - v_0^2)(t - \tau)} 
\left\| G(t - \tau) * \left( \left|v_0^2 - (v_0 + V(\tau,\cdot))^2 \right| 
+ \left|\varphi_1(\tau,\cdot)\right| \right) \right\|_{L_x^\infty} d\tau 
\left\| F_v^{(k)} - F_v^{(k-1)} \right\|_Y \\
\quad + \sup_{0 < t < T} \int_0^t e^{(1 - v_0^2)(t - \tau)} d\tau 
\left\| F_u^{(k)} - F_u^{(k-1)} \right\|_Y \\
\leq \sup_{0 < t < T} \int_0^t e^{(1 - v_0^2)(t - \tau)} 
\left\| G(t - \tau) * \left( \left|v_0^2 - (v_0 + V(\tau,\cdot))^2 \right| 
+ \left|\varphi_1(\tau,\cdot)\right| \right) \right\|_{L_x^\infty} d\tau 
\left\| F_v^{(k)} - F_v^{(k-1)} \right\|_Y \\
\quad + \sup_{0 < t < T} \int_0^t e^{(1 - v_0^2)(t - \tau)} 
\frac{1}{\gamma} d\tau \left\| F_v^{(k)} - F_v^{(k-1)} \right\|_Y.
\end{multline*}

Thus,

$$
\begin{aligned}
\left\|G(t-\tau) *\left(\left|v_0^2-\left(v_0+V(\tau,\cdot)\right)^2\right|+\left|\varphi_1(\tau,\cdot)\right|\right)\right\|_{L^{\infty}_x} & \leq\dfrac{\left\|\left|v_0^2-\left(v_0+V(\tau,\cdot)\right)^2\right|+\mid \varphi_1(\tau,\cdot)\right\|_{L^{\infty}_x}}{\sqrt{\pi(t-\tau)}} \\
& \leq\dfrac{\left\|v_0^2-\left(v_0+V(\tau,\cdot)\right)^2\right\|_{L_x^{\infty}}+\left\|(w(\tau,\cdot)-h\psi)^2\right\|_{L^{\infty}_x}}{\sqrt{\pi(t-\tau)}}\\
&+\dfrac{2\left\|\left(v_0+V\right) (w-h\psi)\right\|_{L^{\infty}_x}}{\sqrt{\pi(t-\tau)}} \\
& \leq\dfrac{\left\|v_0^2-\left(v_0+V(\tau,\cdot)\right)^2\right\|_Y+M^2+2 M\left\|v_0+V\right\|_Y}{\sqrt{\pi(t-\tau)}}
\end{aligned}
$$
which replacing above and using that $v_0^2 - 1 >0$ implies
\begin{equation*}
\begin{aligned}
\left\|F_v^{(k+1)}-F_v^{(k)}\right\|_Y & \leq\left\|F_v^{(k)}-F_v^{(k-1)}\right\|_Y\left(\left\|v_0^2-\left(v_0+V(\tau,\cdot)\right)^2\right\|_Y+M^2+2 M\left\|v_0+V\right\|_Y+\dfrac{1}{\gamma}\right) \sup _{0 \leq t \leq T} \int_0^t \dfrac{e^{\left(1-v_0^2\right)(t-\tau)}}{\sqrt{\pi(t-\tau)}} d \tau \\
& \leq\left\|F_v^{(k)}-F_v^{(k-1)}\right\|_Y \frac{\left\|v_0^2-\left(v_0+V(\tau,\cdot)\right)^2\right\|_Y+M^2+2 M\left\|v_0+V\right\|_Y+\dfrac{1}{\gamma}}{\sqrt{v_0^2-1}}.
\end{aligned}
\end{equation*}
In the last inequality we used that $$\Int_0^t \dfrac{e^{\left(1-v_0^2\right)(t-\tau)}}{\sqrt{\pi(t-\tau)}} d \tau = \dfrac{\sqrt{\pi}  erf\left(\sqrt{(v_0^2-1)t}\right)}{\sqrt{v_0^2-1}}$$ and the fact the error function $|erf(z)| \leq 1$ for all $z\in \R$. Defining $$\alpha(T)=\left(\left\|v_0^2-\left(v_0+V(\tau,\cdot)\right)^2\right\|_Y+M^2+2 M\left\|v_0+V\right\|_Y+\dfrac{1}{\gamma}\right) /\sqrt{v_0^2-1}$$ we obtained that
$$
\left\|F_v^{(k+1)}-F_v^{(k)}\right\|_Y \leq \alpha(T)\left\|F_v^{(k)}-F_v^{(k-1)}\right\|_Y .
$$

Analogously, for $\left\{F_u^{(k)}\right\}$, we get
$$
\left\|F_u^{(k+1)}-F_u^{(k)}\right\|_Y \leq \varepsilon \sup _{0<t<T} \int_0^t \dfrac{e^{-\varepsilon \gamma(t-\tau)}}{\sqrt{d\pi(t-\tau)}} d \tau\left\|F_v^{(k+1)}-F_v^{(k)}\right\|_Y \leq \dfrac{1}{\gamma\sqrt{d}}\left\|F_v^{(k+1)}-F_v^{(k)}\right\|_Y .
$$
Now, in order to state the convergence of the sequences $\left\{F_v^{(k)}\right\},\left\{F_u^{(k)}\right\}$ in $C\left([0, T] ; B C^0(\R^+)\right)$, use that for $m>n$
$$
\begin{gathered}
\left\|F_v^{(m)}-F_v^{(n)}\right\|_Y \leq \sum_{k=n}^{m-1}\left\|F_v^{(k+1)}-F_v^{(k)}\right\|_Y \leq \sum_{k=n}^{m-1} \alpha(T)^k\left\|F_v^{(1)}-F_v^{(0)}\right\|_Y, \\
\left\|F_u^{(m)}-F_u^{(n)}\right\|_Y \leq \sum_{k=n}^{m-1}\left\|F_u^{(k+1)}-F_u^{(k)}\right\|_Y \leq \sum_{k=n}^{m-1} \dfrac{1}{\gamma\sqrt{d}}\left\|F_v^{(k+1)}-F_v^{(k)}\right\|_Y \leq \sum_{k=n}^{m-1} \dfrac{1}{\gamma\sqrt{d}} \alpha(T)^k\left\|F_v^{(1)}-F_v^{(0)}\right\|_Y .
\end{gathered}
$$
Condition \eqref{condition-for-contraction} guarantees that we have a contractive mapping hence the sequence $\left\{\left(F_v^{(k)}, F_{u}^{(k)}\right)\right\}_k$ converges in $C\left([0, T] ; B C^0(\R^+) \times B C^0(\R^+)\right)$ and Theorem \ref{proposition-for-uniqueness} implies that $\left(F_v, F_u\right) \in C([0, \infty), \mathcal{B} \times \mathcal{B})$. To prove \eqref{bounds-linear-equation}, which can be obtained by bounding in a proper manner $\left\|F_v^{(1)}-F_v^{(0)}\right\|_Y$, we write
\begin{align*}
\left\|F_v^{(1)}-F_v^{(0)}\right\|_Y&=\left\| \Int_0^t e^{\left(1-v_0^2\right)(t-\tau)} G(t-\tau) * \varphi_3 d \tau \right\|_Y \\
&\leq \sup _{0 \leq t \leq T} \int_0^t e^{\left(1-v_0^2\right)(t-\tau)}\left\| G(t-\tau) \ast \varphi_3(\tau, \cdot)\right\|_{L^{\infty}_x} d \tau  \\
&\leq  \sup _{0 \leq t \leq T} \int_0^t e^{\left(1-v_0^2\right)(t-\tau)} d \tau \left\| \varphi_3(\tau,\cdot) \right\|_{L^{\infty}_x}.
\end{align*}
Using estimate \eqref{bound-for-high-frequency-term} and Theorem \ref{stability-avg-system} we get
\begin{equation}
 \left\|F_v^{(1)}-F_v^{(0)}\right\|_Y \leq AC,   
\end{equation}
where $C=C(v_0,\gamma, \norm{\psi}_{\infty},\norm{\psi^{\prime}}_{\infty})$. Finally, using the convergence of the sequence $\left\{F_v^{(k)}\right\}$ and that $\alpha=\alpha(T)<1$ we conclude
$$
\begin{aligned}
\left\|F_v-F_v^{(0)}\right\|_Y & \leq\left\|F_v-F_v^{(N+1)}\right\|_Y+\sum_{k=0}^N\left\|F_v^{(k+1)}-F_v^{(k)}\right\|_Y \\
& \leq\left\|F_v-F_v^{(N+1)}\right\|_Y+\sum_{k=0}^N \alpha^k\left\|F_v^{(1)}-F_v^{(0)}\right\|_Y \\
& \leq\left\|F_v-F_v^{(N+1)}\right\|_Y+\frac{AC\alpha}{1-\alpha}.
\end{aligned}
$$

Therefore, by taking the limit as $N \rightarrow \infty$ we get the first estimate in \eqref{bounds-linear-equation}. Analogously for $F_u$ we get
$$
\begin{aligned}
\left\|F_u-F_u^{(0)}\right\|_Y & \leq\left\|F_u-F_u^{(N+1)}\right\|_Y+\sum_{k=0}^N\left\|F_u^{(k+1)}-F_u^{(k)}\right\|_Y \\
& \leq\left\|F_u-F_u^{(N+1)}\right\|_Y+\sum_{k=0}^N \dfrac{1}{\gamma}\left\|F_u^{(k+1)}-F_u^{(k)}\right\|_Y \\
& \leq\left\|F_u-F_u^{(N+1)}\right\|_Y+\dfrac{1}{\gamma\sqrt{d}} \frac{AC\alpha}{1-\alpha}.
\end{aligned}
$$
Taking the limit as $N \rightarrow \infty$ we get the second part of \eqref{bounds-linear-equation}.
\end{proof}
\begin{remark}
The proof of Lemma \ref{lemma-linear-error} has been established for every $d > 0$ using the Gaussian kernel $G_d$. But, if we replace $G_d$ with the limiting kernel associated with diffusion at $d = 0$, namely the Dirac delta $G_0 = \delta(x)$, the structure of the proof remains unchanged. Since no additional assumptions on $d > 0$ were necessary beyond the specific form of $G_d$, and the arguments remain valid under the substitution $G_0 = \delta(x)$, we conclude that the proof holds for all $d \geq 0$.
\end{remark}
The next result establishes that the condition \eqref{condition-for-contraction} in the previous lemma is not too restrictive.
\begin{lemma}\label{lemma-paramters-not-restrictive}
Let $\beta, \gamma$ and $\delta$ satisfy \eqref{parameters} and let $\varepsilon>0$. Suppose that
\begin{enumerate}
    \item[(i)] $\|V\|_Y \leq \min \left\{1, \dfrac{1}{\sqrt{\gamma}} \frac{1}{1+2 \max \{\sqrt{3}, \beta\}}\right\}$,
    \item[(ii)] $M  \leq \min \left\{\frac{1}{\gamma^{1/4}}, \frac{1}{2 \sqrt{\gamma}(1+\max \{\sqrt{3}, \beta)\}}\right\}$.
\end{enumerate}
Then, condition \eqref{condition-for-contraction} in Lemma \ref{lemma-linear-error} is satisfied.
\end{lemma}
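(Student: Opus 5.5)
The plan is to bound each of the four summands of $\alpha(T)$ in \eqref{condition-for-contraction} separately, using only (i), (ii) and the bounds of Lemma~\ref{parameters-lemma}. The goal is to show their sum is strictly less than $1$. Since $\sqrt{v_0^2-1}$ appears in every denominator, it is convenient to multiply through. We then need
\[
\|v_0^2-(v_0+V)^2\|_Y+M^2+2M\|v_0+V\|_Y+\frac{1}{\gamma}<\sqrt{v_0^2-1}.
\]
By Lemma~\ref{parameters-lemma}, $v_0^2-1>\frac{1}{\delta\gamma}>\frac{16}{\gamma}$, because $\delta<1/16$. Hence $\sqrt{v_0^2-1}>4/\sqrt{\gamma}$. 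So it suffices to bound each of the four terms on the left by a multiple of $1/\sqrt{\gamma}$ whose coefficients sum to at most $4$.

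The key input for $V$ is that $|v_0|\le\max\{\sqrt3,\beta\}$, which follows from $\min\{-\beta,-\sqrt3\}\le v_0<0$. Write $K:=\max\{\sqrt3,\beta\}$. The four terms are handled as follows.

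\textbf{First term.} Since $v_0^2-(v_0+V)^2=-V(2v_0+V)$ and $\|V\|_Y\le1$, we get
\[
\|v_0^2-(v_0+V)^2\|_Y\le \|V\|_Y(2K+1)\le \frac{1}{\sqrt\gamma}.
\]

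\textbf{Second term.} From (ii), $M\le\gamma^{-1/4}$, so $M^2\le 1/\sqrt{\gamma}$.

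\textbf{Third term.} Since $\|v_0+V\|_Y\le K+1$ and, by (ii), $M\le \frac{1}{2\sqrt\gamma(1+K)}$, we get $2M\|v_0+V\|_Y\le \frac{1}{\sqrt\gamma}$.

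\textbf{Fourth term.} Here $\frac1\gamma\le\frac1{\sqrt\gamma}$ provided $\gamma\ge1$.

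Adding the four bounds gives a left-hand side at most $4/\sqrt\gamma$, which is strictly less than $\sqrt{v_0^2-1}$. This yields $\alpha(T)<1$ for every $T$, since all the bounds are uniform in $t$.

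The main obstacle is the fourth term: it requires $\gamma\ge1$, or some other comparison of $1/\gamma$ with $\sqrt{v_0^2-1}$. I would first try to extract $\gamma\ge1$ from the admissibility condition \eqref{parameters}. If the bracket $2-(3+1/\delta)/\gamma$ were nonnegative, this would immediately force $\gamma>3+1/\delta>19$. Otherwise the positive term $3\beta/\gamma$ must compensate, and I would check whether that case still forces $\gamma\ge1$.

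If this fails, I would use the slack instead. The strict inequality $v_0^2-1>1/(\delta\gamma)$ with $\delta<1/16$ leaves room in the comparison. Alternatively, one can directly compare
\[
\frac{1}{\gamma\sqrt{v_0^2-1}}<\frac{\sqrt{\delta}}{\sqrt{\gamma}}.
\]
This is $<1/(4\sqrt\gamma)$, which again is at most $1/4$ when $\gamma\ge1$. For small $\gamma$, the stronger bound $\sqrt{v_0^2-1}>\sqrt{16/\gamma}$ combined with $v_0^2\ge 1+16/\gamma$ should still dominate $1/\gamma$. Note also that the bound $|v_0|\le K$ could be tightened using $v_0<-\sqrt{1+1/(\delta\gamma)}$ if any constant turns out borderline.
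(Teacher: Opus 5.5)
Your bounds on the first three summands are exactly the paper's. The paper shows each summand of $\alpha(T)$ is at most $\sqrt{\delta}$, using $1/\sqrt{v_0^2-1}<\sqrt{\delta\gamma}$ and $|v_0|\le\max\{\sqrt{3},\beta\}$, and then uses $4\sqrt{\delta}<1$; multiplying through by $\sqrt{v_0^2-1}$ is the same computation. The genuine gap is the fourth summand $\frac{1}{\gamma\sqrt{v_0^2-1}}$, and neither of your escape routes works.

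\begin{itemize}
\item Admissibility does not give $\gamma\ge 1$. When $2-(3+1/\delta)/\gamma<0$, the term $3\beta/\gamma$ compensates for arbitrarily small $\gamma$ once $\beta$ is large.
\item Your fallback is false, since $4/\sqrt{\gamma}\ge 1/\gamma$ only for $\gamma\ge 1/16$.
\end{itemize}

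In fact no estimate can close this step, because the statement fails for small $\gamma$. Take $\gamma=1/32$ and $\beta=\frac{559}{48}\sqrt{1025}$. The map $v\mapsto v^3/3+31v$ is strictly increasing, so $v_0=-\sqrt{1025}$ is the unique root of $\frac{v^3}{3}+31v+32\beta=0$, and $v_0^2-1=1024=\gamma^{-2}$. The first condition in \eqref{parameters} holds since $\gamma<1$. The second is exactly $f\bigl(-\sqrt{1+1/(\delta\gamma)}\bigr)>0$ for the increasing cubic $f(v)=v^3+3(\gamma^{-1}-1)v+3\beta/\gamma$. That is, it says $v_0<-\sqrt{1+1/(\delta\gamma)}$, i.e.\ $\delta>1/32$, so $\delta=1/20$ works. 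Yet $\frac{1}{\gamma\sqrt{v_0^2-1}}=1$, hence $\alpha(T)\ge 1$ however small $V$ and $M$ are.

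The paper's proof has the same hole. Its last line bounds $\frac{1}{\sqrt{\gamma}}\frac{1}{\sqrt{v_0^2-1}}$ by $\sqrt{\delta}$. That is not the summand appearing in \eqref{condition-for-contraction}, and it dominates that summand only when $\gamma\ge 1$. So you correctly located the weak point, but what is missing is a hypothesis, not a sharper estimate.

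One fix is to add $\gamma\ge 1$. This holds automatically, e.g., on the explicit admissible family $\gamma\ge(2\delta+1)/\delta$ of Lemma~\ref{parameters-lemma}. With it, your argument is complete and coincides with the paper's.

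Alternatively, repair the term at its source. In the proof of Lemma~\ref{lemma-linear-error}, the $F_u$-contribution carries no factor $(\pi(t-\tau))^{-1/2}$. It is bounded directly by $\frac{1}{\gamma}\int_0^t e^{-(v_0^2-1)s}\,ds\le\frac{1}{\gamma(v_0^2-1)}<\delta$. This replaces the fourth summand of $\alpha(T)$ by a quantity below $\delta$, so the corrected $\alpha(T)<3\sqrt{\delta}+\delta<1$ for every admissible $\gamma$.
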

\begin{proof}
    Using the bounds in $\frac{1}{v_0^2-1}$ and $\left|v_0\right|$ provided by Lemma \ref{parameters-lemma} and our hypothesis we get
$$
\begin{aligned}
\frac{\left\|v_0^2-\left(v_0+V\right)^2\right\|_Y}{\sqrt{v_0^2-1}} & \left.=\frac{\left\|V\left(V+2 v_0\right)\right\|_Y}{\sqrt{v_0^2-1}} \leq \sqrt{\delta \gamma}\|V\|_Y\left(\|V\|_Y+2 \max \{\sqrt{3}, \beta\}\right\}\right) \leq \sqrt{\delta}, \\
\frac{M^2}{\sqrt{v_0^2-1}} & \leq  M^2 \sqrt{\delta \gamma} \leq \sqrt{\delta}, \\
\frac{2 M\left\|v_0+V\right\|_Y}{\sqrt{v_0^2-1}} & \leq 2 M\left(\|V\|_Y+\max \{\sqrt{3}, \beta\}\right) \sqrt{\delta \gamma} \leq \sqrt{\delta}, \\
\dfrac{1}{\sqrt{\gamma}} \frac{1}{\sqrt{v_0^2-1}} & \leq \sqrt{\delta} .
\end{aligned}
$$

We obtain that for all $0<\delta<1 / 4$ condition \eqref{condition-for-contraction} is satisfied.
\end{proof}
\subsection{Nonlinear estimate of the error}
The next lemma shows the error estimate for the non-linear equation \eqref{nonlinear-error-equation}. We omit the proof since it is analogous to the proof of \cite[Lemma 15]{cerpa2023approximation}.
\begin{lemma}\label{lemma-nonlinear-error}
(Non-linear estimate for the error equation.) Let $\mathcal{B} =W^{k, p}(\mathbb{R^+})$ with $k \geq 3, p \geq 1$,$\varepsilon>0$, let $\gamma, \beta$ satisfy \eqref{parameters} and let $\left(v_0, u_0\right)$ be given by \eqref{centerequation}. For some $T>0$ let $(V, U) \in C([0, T] ; \mathcal{B} \times \mathcal{B})$ be a solution of \eqref{partially-avg-system}, let $\varphi_1, \varphi_2$ be given by Lemma \ref{lemma-linear-error} and let $\left(F_v, F_u\right) \in C([0, T] ; \mathcal{B} \times \mathcal{B})$ be the corresponding solution of \eqref{linear-error-equation}.
Given $\mu>0$ there exist constants $C_1(\mu), C_2(\mu), C_3(\mu)>0$ such that if
\begin{itemize}
\item $\|V\|_Y \leq C_1(\mu)$,
\item $\left\|\varphi_1\right\|_Y \leq C_2(\mu)$,
\item $\left\|\varphi_2\right\|_Y \leq\left|v_0\right|+C_1(\mu)\left( 2\norm{w}_Y +1 \right) +\norm{h\psi}_Y$,
\item $\left|F_v(t,x)\right| \leq C_3(\mu),\left|F_u(t,x)\right| \leq \mu / 2$, for $x \in [0,\infty), 0 \leq t \leq T$,
\end{itemize}
then, there exists a unique solution $\left(E_v, E_u\right) \in C([0, T] ; \mathcal{B} \times \mathcal{B})$ of \eqref{nonlinear-error-equation} that also satisfies the estimate
$$
\left|E_v(t,x)\right| \leq \mu, \quad\left|E_u(t,x)\right| \leq \mu, \quad \forall x \in [0,\infty), \quad 0 \leq t \leq T .
$$   
\end{lemma}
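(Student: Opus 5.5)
The strategy is to treat the nonlinear error equation \eqref{nonlinear-error-equation} as a perturbation of the linear problem \eqref{linear-error-equation}. I would write $E_v = F_v + e_v$ and $E_u = F_u + e_u$, where $(F_v,F_u)$ is the solution given by Lemma \ref{lemma-linear-error}. Subtracting the two systems shows that $(e_v,e_u)$ solves
\begin{equation*}
\left\{\begin{aligned}
\partial_t e_v-\partial_x^2 e_v &= \left(1-(v_0+V)^2+\varphi_1\right)e_v - e_u + \varphi_2 (F_v+e_v)^2 - \tfrac13 (F_v+e_v)^3,\\
\partial_t e_u-d\partial_x^2 e_u &= \varepsilon(e_v-\gamma e_u),
\end{aligned}\right.
\end{equation*}
with zero initial data and a homogeneous Neumann condition. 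This is the same linear operator as in Lemma \ref{lemma-linear-error}. The forcing is now a polynomial in $e_v$ whose coefficients are built from $\varphi_2$ and $F_v$, and its $e_v$-independent part $\varphi_2F_v^2-F_v^3/3$ is of size $O(C_3(\mu)^2)$.

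Local existence and uniqueness of $(E_v,E_u)$ in $C([0,t_0];\mathcal{B}\times\mathcal{B})$ follow from Theorem \ref{theorem-local-solution} and Proposition \ref{proposition-for-uniqueness}. The coefficients $V$, $w$ and $h\psi$ are bounded in $\mathcal{B}$, which is why we need $k\ge 3$ and Lemma \ref{lemma-bound-high-frquencyterm}; with this, the nonlinearity satisfies \ref{A.4}.

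The main step is an a priori bound obtained by a bootstrap on the Duhamel representation used in the proof of Lemma \ref{lemma-linear-error}. Suppose that on some interval $[0,T']$ we have $\|e_v\|_Y\le\eta$ for a small $\eta$ to be chosen. Using the kernel estimate $\int_0^t e^{(1-v_0^2)(t-\tau)}(\pi(t-\tau))^{-1/2}\,d\tau\le (v_0^2-1)^{-1/2}$ and the bound $\|e_u\|_Y\le \|e_v\|_Y/\gamma$ coming from the second equation, we get
\[
\|e_v\|_Y \le \alpha\,\|e_v\|_Y + \frac{\|\varphi_2\|_Y\,(C_3+\eta)^2 + (C_3+\eta)^3/3}{\sqrt{v_0^2-1}}.
\]
Here $\alpha<1$ is ensured by Lemma \ref{lemma-paramters-not-restrictive}, and this is why $C_1(\mu)$ and $C_2(\mu)$ are taken small: $\|V\|_Y$ and $\|\varphi_1\|_Y$ enter $\alpha$. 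The hypothesis on $\|\varphi_2\|_Y$ gives a bound on it that does not depend on $\mu$.

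I would therefore fix $\eta=\min\{\mu/2,\ \gamma\mu/2\}$ and then choose $C_3(\mu)$ so small that the right-hand side, divided by $1-\alpha$, is strictly less than $\eta$. A continuity argument in time then closes the bootstrap, because $\|e_v(t)\|_{L^\infty_x}$ is continuous in $t$ and vanishes at $t=0$. This gives $|E_v|\le C_3+\eta\le\mu$ and $|E_u|\le \mu/2+\eta/\gamma\le\mu$. Since the bound is in sup norm and stays uniform up to $T$, the local solution extends to all of $[0,T]$ exactly as in the proof of Theorem \ref{theorem-local-solution}.

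The obstacle I expect is reconciling the contraction constant: Lemma \ref{lemma-linear-error} as stated controls $\alpha$ through $M\ge\|w-h\psi\|_Y$, while here the smallness must come from $C_2(\mu)$. I would handle this by replacing the quantity $M^2+2M\|v_0+V\|_Y$ in $\alpha$ by $\|\varphi_1\|_Y\le C_2(\mu)$, and by requiring $C_1(\mu)$ and $C_2(\mu)$ to satisfy the smallness conditions of Lemma \ref{lemma-paramters-not-restrictive}. The case $d=0$ is handled as in the remark following Lemma \ref{lemma-linear-error}: the kernel $G_d$ is replaced by the Dirac delta $\delta(x)$ and the argument is otherwise unchanged.
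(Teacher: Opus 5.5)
Your argument is sound in structure, but it is not the route the paper points to. The paper gives no proof of this lemma and defers to [Lemma 15] of \cite{cerpa2023approximation}. The mechanism intended there, the ``contracting rectangles with spatially and temporally varying coefficients'' of the abstract, works with the same unknowns $e_v=E_v-F_v$, $e_u=E_u-F_u$ that you introduce. Instead of estimating a Duhamel formula, it checks face by face, as in Lemma \ref{lemma-existence-smallrectangles}, that a rectangle $R_{L,S}$ with $L,S\le\mu/2$ and $1/\gamma<S/L<v_0^2-1$ is contracting for the $(t,x)$-dependent field $\bigl((1-(v_0+V)^2+\varphi_1)e_v+\varphi_2(F_v+e_v)^2-\tfrac13(F_v+e_v)^3-e_u,\ \varepsilon(e_v-\gamma e_u)\bigr)$. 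It then keeps $(e_v,e_u)$ inside that rectangle by a first-touching-time maximum-principle argument, as in the proof of Theorem \ref{theorem-global-solution}. In that argument, smallness of $C_1,C_2$ keeps the linear coefficient close to $1-v_0^2$, the bound on $\|\varphi_2\|_Y$ together with $L$ small absorbs the quadratic term, and $C_3\ll L$ absorbs the remainder coming from $F_v$.

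Your bootstrap trades this pointwise, sign-based reasoning for norms. It reuses the contraction constant of Lemma \ref{lemma-linear-error}, needs only the $L^\infty$-contractivity of the Neumann heat semigroup and continuity of $t\mapsto\|e_v(t)\|_{L^\infty_x}$, and handles $d=0$ trivially. The rectangle argument needs no kernel estimates, but it does need invariance of rectangles for fields depending on $(t,x)$: strict face inequalities uniform in $x$, and control as $x\to\infty$ and at $x=0$. Both routes rest on the same arithmetic, $v_0^2-1>1/\gamma$ with a margin. In your version the coupling enters as the $1/\gamma$ in $\alpha$ via $\|e_u\|_Y\le\|e_v\|_Y/\gamma$; in the rectangle version it enters as the top/bottom face condition.

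Two repairs are needed for your version to close.

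First, with $\eta=\min\{\mu/2,\gamma\mu/2\}$ the step ``choose $C_3(\mu)$ small'' fails when $\mu$ is not small. The forcing bound contains $\|\varphi_2\|_Y\eta^2$, which does not shrink with $C_3$. The fix is as follows.
\begin{itemize}
\item Let $K$ bound $\|\varphi_2\|_Y$; for instance $K=|v_0|+2\|w\|_Y+1+\|h\psi\|_Y$ if $C_1\le 1$.
\item Pick $\eta_0>0$ with $4K\eta_0+\tfrac83\eta_0^2<(1-\alpha)(v_0^2-1)$.
\item Take $\eta=\min\{\mu/2,\gamma\mu/2,\eta_0\}$; then any $C_3\le\eta$ works.
\end{itemize}

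Second, do not import the bound $\|G(t-\tau)*f\|_{L^\infty_x}\le\|f\|_{L^\infty_x}/\sqrt{\pi(t-\tau)}$ from the proof of Lemma \ref{lemma-linear-error}. It is false for $t-\tau>1/\pi$ (take $f\equiv1$). The correct bound $\|G(t-\tau)*f\|_{L^\infty_x}\le\|f\|_{L^\infty_x}$ gives the weight $\int_0^te^{(1-v_0^2)(t-\tau)}\,d\tau\le(v_0^2-1)^{-1}$. Hence $\alpha=\bigl(\|v_0^2-(v_0+V)^2\|_Y+\|\varphi_1\|_Y+1/\gamma\bigr)/(v_0^2-1)$. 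Then $\alpha<1$ follows directly from $1/(\gamma(v_0^2-1))<\delta$ in Lemma \ref{parameters-lemma} once $C_1,C_2$ are small, with no need for Lemma \ref{lemma-paramters-not-restrictive}.
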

\subsection{Proof of Theorem \ref{theorem-approximation}}
\begin{proof}
Given $\mu>0$, we choose $C_1(\mu), C_2(\mu)$, $C_3(\mu)>0$ as in Lemma \ref{lemma-nonlinear-error} and fix $T>0$. In order to use the above results we will need $\|V\|_Y$ and $A\norm{\psi}_Y$ to be small enough. To make this more precise, assume we have $\|V\|_Y<M_1$ and $A\norm{\psi}_Y + \norm{w}_Y<N_1$. If $M_1, N_1$ were small, as indicated in Lemma \ref{lemma-paramters-not-restrictive}, then condition \eqref{condition-for-contraction} would be fulfilled and Lemma \ref{lemma-linear-error} would imply
$$
\left|F_v(t,x)\right| \leq C_3(\mu), \quad\left|F_u(t,x)\right| \leq \mu / 2, \quad \forall x \in [0,\infty), t \in[0, T].
$$
Additionally, we would also have the bounds
\begin{enumerate}
\item[$\bullet$] $\left|(v_0^2-\left(v_0+V\right)^2 \right| \leq M_1\left(2\left|v_0\right|+M_1\right)$,
\item[$\bullet$] $\left|h\psi\right| \leq N_1$,
\item[$\bullet$] $\left|\varphi_1\right|=\left|2(v_0+V)(w-h\psi) + (w-h\psi)^2\right| \leq N_1\left(N_1+2\left(\left|v_0\right|+M_1\right)\right)$,
\item[$\bullet$] $\left|\varphi_2\right|=\left|v_0+V(1+2w) + h\psi\right| \leq\left|v_0\right|+M_1(1+2N_1)+N_1$.
\end{enumerate}
We will choose $M_1, N_1$ small enough such that whenever $\|V\|_Y<M_1$ and $A\norm{\psi}_Y +  \norm{w}_Y<N_1$ then we also have $\|V\|_Y \leq C_1(\mu),\left|\varphi_1\right| \leq C_2(\mu)$ and $\left|\varphi_2\right| \leq\left|v_0\right|+C_1(\mu)\left( 2\norm{w}_Y +1 \right) +\norm{h\psi}_Y$. With all these choices taken care of, we can put all the previous results together.\\

Given an open neighborhood $\mathcal{O}$ of $(0,0)$ we denote $\hat{\mathcal{O}}=\mathcal{O} \cap\left(-M_1, M_1\right) \times(-1,1)$ and let $R$ be the rectangle given by Theorem \ref{stability-avg-system} corresponding to $\mathcal{O}$. Set the rectangle in Theorem \ref{theorem-approximation} to be this rectangle $R$ and set $N$ in Theorem \ref{theorem-approximation} to be $N_1$. Next, by assumption, the initial data $\left(f_0-v_0, g_0-u_0\right) \in \mathcal{B} \times \mathcal{B}$ lies within the rectangle $R$ and from Theorem \ref{stability-avg-system} we know that there exists a unique solution $(V, U) \in C([0, \infty) ; \mathcal{B} \times \mathcal{B})$ of the system \eqref{partially-avg-system} with initial data $\left(f_0-v_0, g_0-u_0\right)$, which also satisfies
$$
(V(t,x), U(t,x)) \in R, \quad \forall x \in \mathbb{R}^+, \quad t \geq 0 .
$$

In particular, by construction of $R$, we have that $|V(t,x)| \leq M_1, \forall x \in \mathbb{R}, t>0$. Fix $T>0$, from the choices of $M_1, N_1$, the system \eqref{linear-error-equation} has a unique solution $\left(F_v, F_u\right) \in C([0, T] ; \mathcal{B} \times \mathcal{B})$ by Lemma \ref{lemma-linear-error}. We also verify the hypotheses of Lemma \ref{lemma-nonlinear-error} , so we conclude there exists a unique solution $\left(E_v, E_u\right) \in C([0, T] ; \mathcal{B} \times \mathcal{B})$ of \eqref{nonlinear-error-equation} which satisfies
\begin{equation}\label{bounds-for-proof-theorem-approximation}
\left|E_v(t,x)\right| \leq \mu, \quad\left|E_u(t,x)\right| \leq \mu, \quad \forall x \in \mathbb{R}^+, t \in[0, T] .
\end{equation}
Finally, using the definition of the equation \eqref{nonlinear-error-equation} for the approximation error
$$
v=V+E_v+w, \quad u=U+E_u, \quad x \in \mathbb{R}^+, t \in[0, T],
$$
we conclude that there exists a unique solution $(v, u) \in C([0, T] ; \mathcal{B} \times \mathcal{B})$ of the system \eqref{ecuacionfhninternal} with initial data $\left(\bar{f}_0, \bar{g}_0\right)=\left(f_0-v_0, g_0-w_0\right)$ which satisfies \eqref{bounds-for-proof-theorem-approximation}. Returning to the original variables, there exists a unique solution $(f, g)$ of \eqref{ecuacionfhnneuman} with initial data $\left(f_0, g_0\right)$ that satisfies $\left(f-v_0, g-u_0\right) \in C([0, T] ; \mathcal{B} \times \mathcal{B})$, and that additionally satisfies the estimate
$$
\left|f+h\psi-v_0-V-w\right| \leq \mu, \quad\left|g-u_0-U\right| \leq \mu, \quad \forall x \in \mathbb{R}^+, t \in[0, T] .
$$

Since $T>0$ was arbitrary, this concludes the proof.
\end{proof}

\section{Appendix}
\subsection{Proof of Lemma \ref{lemma-of-large-rectangle}}\label{appendix-proof-lemma-largerectangle}
For $H= (f,g)$, we have to verify that the vector field
$$
F_\phi(H,x,t)=\left(f+\phi - \frac{(f+\phi)^3}{3} - g, \varepsilon (f + \gamma g)\right) 
$$
is pointing inwards at each point of the boundary of $R.$ So we do the explicit calculations since we can write the outward normal for each face of R.
\begin{enumerate}[align=left, labelwidth=1ex]
\item[1. Top Face.] At $g=S, f \in[-L, L]$ we have
$$
\begin{aligned}
(0,1) \cdot F_\phi(H,x,t) & =\varepsilon(f-\gamma S) \\
& \leq \varepsilon(L-\gamma S) .
\end{aligned}
$$

Therefore, the vector field will point inwards if 
\begin{equation}\label{largerectangle-condition-topface}
\frac{1}{\gamma}<\frac{S}{L}.
\end{equation}
\item[2. Bottom Face.] At $g=-S, f \in[-L, L]$ we have
$$
\begin{aligned}
(0,-1) \cdot F_\phi(H,x,t) & =-\varepsilon(f+\gamma S) \\
& \leq \varepsilon(L-\gamma S) .
\end{aligned}
$$

Therefore, we get the same condition as for the top face.
\item[3. Left Face.] At $f=-L, g \in[-S, S]$ we get the following
\begin{align*}
(-1,0) \cdot F_\phi(H,x,t) &\leq -\left( -L + \phi + L^3/3 - L^2\phi + L\phi^2 - \phi^3/3 - S \right)\\
&\leq L + \tilde{M} - \dfrac{L^3}{3} + L^2\tilde{M} +\dfrac{\tilde{M}^3}{3} +S.
\end{align*}
We want to choose $L$ and $S$ such that the right-hand side is negative, then the sides of $R$ must satisfy 
\begin{equation}\label{largerectangle-condition-leftface}
 L + \tilde{M}  +L^2\tilde{M}  + \tilde{M}^3/3 + S < L^3/3.
\end{equation}
\item[4. Right Face.] At $f=L, g \in[-S, S]$ we get the following
\begin{align*}
(1,0) \cdot F_\phi(H,x,t) &= L + \phi - L^3/3 -L^2\phi - L\phi^2 - \phi^3/3 - g\\
&\leq L + \tilde{M} - L^3/3 +L^2\tilde{M} - L\phi^2 + \tilde{M}^3/3 + S\\
&\leq L + \tilde{M} - L^3/3 +L^2\tilde{M}  + \tilde{M}^3/3 + S.
\end{align*}
Therefore, we get the same condition as for the left face.
\end{enumerate}
Combining \eqref{largerectangle-condition-topface} and \eqref{largerectangle-condition-leftface} we obtain the following set of sides $(L,S)$ for which $R$ is contracting
\begin{equation}
Q = \left\{(L, S): \frac{1}{\gamma}<\frac{S}{L}, \quad L + \tilde{M}  +L^2\tilde{M}  + \tilde{M}^3/3 + S < L^3/3\right\}.
\end{equation}
Where $\gamma$ is taken according to Lemma \ref{parameters-lemma}. Given $\gamma$ and $\tilde{M}$, big values of $S$ and $L$ satisfy both restrictions given in $Q$, this address the existence of large contracting rectangles.
\subsection{Proof of Lemma \ref{lemma-bound-high-frquencyterm}}\label{appendix-proof-high-frequency}
First, let us remember the heat kernel with diffusion coefficient $\rho$
\begin{equation}
G_{\rho}(x, y, t):=\frac{1}{2 \sqrt{\rho \pi t}} e^{-\frac{(x-y)^2}{4\rho t}}.
\end{equation}
Applying an even extension and via the Duhamel principle we can see that the solution is given by
\begin{equation}
w(t, x)=\int_0^t \int_0^{\infty} G(x, y, t-\tau) f(\tau,y) d y d \tau+\int_0^{\infty} G(x, y, t) w(0,y) d y,
\end{equation}
where $f$ is the forcing term and $G(x,y,t) = \tilde{G_1}(x-y,t) + \tilde{G_1}(x+y,t)$. So, if we want to find an appropriate bound for $w$, solution of \eqref{highfrequencyequation}, we have to prove for $f\in C(\R^+ \times [0,T])$ there exist $C>0$ such that the integral term satisfies
\begin{equation}
    \left\|\int_0^t \int_0^{\infty} G(x, y, t-\tau) f(\tau,y) d y d \tau\right\|_Y \leq C.
\end{equation}
Moreover, for $\omega >>1$ we want to find $C>0$ independent of $\omega$ such that the following holds
\begin{equation}
\left\|\int_0^t \int_0^{\infty} \frac{e^{-\frac{|x-y|^2}{d(t-s)}+\left(1-v_0^2\right)(t-s)}}{(4 \pi(t-s))^{1 / 2}} f(s,y) e^{i \omega s} d y d s\right\|_Y \leq C \dfrac{1}{\omega}\left(\left\|\partial_t f\right\|_Y+\left\|\partial_x f\right\|_Y+\|f\|_Y\right).
\end{equation}
So considering the change of variables $s \rightarrow (t-\tau), (x-y) \rightarrow 2 \tau^{1 / 2} z$, we can write
\begin{align*}
I(t, x)&=\int_0^t \int_0^{\infty} \frac{e^{-\frac{|x-y|^2}{4(t-n)}+\left(1-v_0^2\right)(t-s)}}{(4 \pi(t-s))^{1 / 2}} f(s,y) e^{i \omega s} d y d s\\
&=e^{i \omega t} \int_0^t e^{-\left(i \omega+\left(v_0^2-1\right)\right) \tau} \frac{1}{\sqrt{\pi}} \int_{\frac{x}{2\tau^{1/2}}}^{\infty} e^{-z^2} f\left(\tau, x-2 \tau^{1 / 2} z\right) d z d \tau .
\end{align*}

Letting $\varphi(t,x)=\frac{1}{\sqrt{\pi}} \int_{\frac{x}{2\tau^{1/2}}}^{\infty} e^{-z^2} f\left(t, x-2 t^{1 / 2} z\right) d z$ we can verify that
$$
\begin{aligned}
\|\varphi(t,\cdot)\|_{L_x^{\infty}} & \leq\|f(\cdot, t)\|_{L_x^{\infty}} \\
\left\|\partial_t \varphi(t,\cdot)\right\|_{L_x^{\infty}} & =\left\|\frac{1}{\sqrt{\pi}} \int_{\frac{x}{2\tau^{1/2}}}^{\infty} e^{-z^2}\left(\partial_x f\left(t,x-2 t^{1 / 2} z\right)\left(-t^{-1 / 2} z\right)+\partial_t f\left(t,x-2 t^{1 / 2} z\right)\right) d z\right\|_{L_x^{\infty}} \\
& \leq \frac{1}{\sqrt{t \pi}}\left\|\partial_x f(t,\cdot)\right\|_{L_x^{\infty}}+\left\|\partial_t f(t, \cdot)\right\|_{L_x^{\infty}} .
\end{aligned}
$$

For each $x$ we integrate by parts in $\tau$, and use that $v_0^2-1 >0$, to get that for any $(t,x)$
$$
\begin{aligned}
|I(t,x)| & =\left|\int_0^t e^{-\left(i \omega+\left(v_0^2-1\right)\right) \tau} \varphi(t,x) d \tau\right| \\
& \left.=\frac{1}{\left|i \omega+v_0^2-1\right|}\left|e^{-\left(i \omega+\left(v_0^2-1\right)\right) \tau} \varphi(\tau ,x)\right|_{\tau=0}^t-\int_0^t e^{-\left(i \omega+\left(v_0^2-1\right)\right) \tau} \partial_t \varphi(\tau ,x) d \tau \right\rvert\, \\
& \leq \frac{1}{\omega}\left(2\|f\|_Y+\left(\left\|\partial_x f\right\|_Y+\left\|\partial_t f\right\|_Y\right) \int_0^t e^{-\left(v_0^2-1\right) \tau}\left(\frac{1}{\sqrt{\tau \pi}}+1\right) d \tau\right) \\
& \leq \frac{C}{\omega}\left(\|f\|_Y+\left\|\partial_x f\right\|_Y+\left\|\partial_t f\right\|_Y\right),
\end{aligned}
$$
for some $C$ that only depends on $v_0^2-1$. Similarly we find a bound using $\tilde{G_1}(x+y,t)$, and we conclude that 
\begin{equation}\label{integral-estimate-for-w}
    \left\|\Int_0^t e^{\left(1 - v_0^2\right)(t-\tau)}G(t-\tau) \ast f\right\|_Y \leq \frac{C_{v_0}}{\omega}\left(\|f\|_Y+\left\|\partial_x f\right\|_Y+\left\|\partial_t f\right\|_Y\right).
\end{equation}

The idea of the proof of Lemma \ref{lemma-bound-high-frquencyterm} is to consider an iterative approximation of \eqref{highfrequencyequation}. Multiplying the equation by $e^{-\left(1 -v_0^2\right)t}$ we get
$$
\partial_t e^{-\left(1-v_0^2\right) t} w-\partial_x^2 e^{-\left(1-v_0^2\right) t} w=\left(2v_0 h\psi - h^2\psi^2 \right)e^{-\left(1-v_0^2\right) t} w +e^{-\left(1-v_0^2\right) t} \dot{h}\psi .
$$

By virtue of Duhamel's principle, we get
$$
w(t, x)=\Int_0^t e^{\left(1-v_0^2\right)(t-\tau)} G(t-\tau) *\left(\left(2v_0 h\psi - h^2\psi^2 \right)w+\dot{h} \psi \right) d \tau .
$$
Let us consider the following iterative procedure. Set $w^{(0)}=0$ and define 
$$
w^{(k+1)}(t, x)=\Int_0^t e^{\left(1-v_0^2\right)(t-\tau)} G(t-\tau) *\left(\left(2v_0 h\psi - h^2\psi^2 \right)w^{(k)}+\dot{h} \psi \right) d \tau .
$$
Now, the next step is to look at the convergence of the sequence $\{w^{(k)}\}_{k\in\N}$ in the space $C\left([0,T];BC^0(R^+)\right)$. We consider the difference between two consecutive terms and using that $v_0^2 - 1>0$ we obtain
\begin{align*}
    \left\| w^{(k+1)} - w^{(k)} \right\|_Y &\leq \underset{0<t<T}{\sup}\Int_0^t \dfrac{e^{\left( 1 -v_0^2\right)(t-\tau)}}{\sqrt{\pi(t-\tau)}}\left\| 2v_0 h\psi - h^2\psi^2 \right\|_{L^{\infty}_x}\left\| w^{(k)} - w^{(k-1)}  \right\|_Y d\tau\\
    &\leq \left\| 2v_0 h\psi - h^2\psi^2\right\|_{L^{\infty}_x}\left\| w^{(k)} - w^{(k-1)}  \right\|_Y \underset{0<t<T}{\sup}\Int_0^t \dfrac{e^{\left( 1 -v_0^2\right)(t-\tau)}}{\sqrt{\pi(t-\tau)}} d\tau\\
    &\leq \frac{M^2+2 M|v_0|}{\sqrt{v_0^2-1}}\left\| w^{(k)} - w^{(k-1)}\right\|_Y.
\end{align*}
We see that we got the part of the condition \eqref{condition-for-contraction}, so defining as we did before 
$$
\tilde{\alpha}(T) = \frac{M^2+2 M|v_0|}{\sqrt{v_0^2-1}}
$$
and for $m>n$ we see that
$$
\left\|w^{(m)}-w^{(n)}\right\|_Y \leq \sum_{k=n}^{m-1}\left\|w^{(k+1)}-w^{(k)}\right\|_Y \leq \sum_{k=n}^{m-1} \tilde{\alpha}^k\left\|w^{(1)}-w^{(0)}\right\|_Y.
$$
Hence, we have a contraction mapping, therefore the sequence $\{w^{(k)}\}_{k\in \N}$ converges in $C([0,T];BC^0(R^+) \times BC^0(R^+))$. With this in mind, the estimate on $w$ will be obtained by bounding $\left\| w^{(1)} - w^{(0)} \right\|_Y$, since $w^{(0)} = 0$ 
\begin{equation*}
    w^{(1)} - w^{(0)} = \Int_0^t e^{\left( 1 - v_0^2\right)} G (t-\tau) \ast A\omega \cos(\omega \tau)\psi d\tau,
\end{equation*}
so, using \eqref{integral-estimate-for-w} with $f(t,x) = A\omega \psi(x)$, we obtain
\begin{equation*}
    \left\| w^{(1)} - w^{(0)} \right\|_Y \leq AC_1,
\end{equation*}
where $C_1 = C(v_0,\norm{\psi}_{\infty},\norm{\psi^{\prime}}_{\infty})$.
 Finally, using the convergence of the sequence ${w^{(k)}}$ and that $\alpha <1$ we conclude
$$
\begin{aligned}
\left\|w-w^{(0)}\right\|_Y & \leq\left\|w-w^{(N+1)}\right\|_Y+\sum_{k=0}^N\left\|w^{(k+1)}-w^{(k)}\right\|_Y \\
& \leq\left\|w-w^{(N+1)}\right\|_Y+\sum_{k=0}^N \tilde{\alpha}^k\left\|w^{(1)}-w^{(0)}\right\|_Y \\
& \leq\left\|w-w^{(N+1)}\right\|_Y+\frac{AC_1\tilde{\alpha}}{1-\tilde{\alpha}}  .
\end{aligned}
$$
Therefore, by taking the limit as $N\to \infty$ we get \eqref{bound-for-high-frequency-term}. Concluding that,
$$ 
|w(t,x)| \leq C_1 A \quad \text{for all} \quad x\in [0,\infty), 0\leq t \leq T.
$$
Similarly we can get the estimate on $w_{avg}$, i.e., we apply the same iterative process to 
$$
w_{avg}^{(k+1)}(t, x)=\Int_0^t e^{\left(1-v_0^2\right)(t-\tau)} G(t-\tau) *\left(\dfrac{A^2}{2} w_{avg}^{(k)}+ \frac{CA}{\omega} \right) d \tau ,
$$
to get the existence of the solution $w_{avg}$ in $C([0,\infty];\mathcal{B})$. And finally we see that 
\begin{equation*}
    w_{avg}^{(1)} - w_{avg}^{(0)} = \Int_0^t e^{\left( 1 - v_0^2\right)} G (t-\tau) \ast  \frac{CA}{\omega} d\tau,
\end{equation*}
which help us to conclude $|w_{avg}| \leq \dfrac{C_2A}{\omega}$ for all $x\in [0,\infty), 0\leq t \leq T.$ Concluding in this way the proof of Lemma \ref{lemma-bound-high-frquencyterm}.

\section*{Conflict of Interest Statement}
The authors declare that they have no conflict of interest.

\section*{Data Availability Statement}
This study does not involve any datasets. All results are derived analytically and are fully presented within the manuscript.

\bibliographystyle{acm}
\bibliography{bibliography}

\end{document}